\newcommand{\R}{\mathbb{R}}
\newcommand{\ds}{\displaystyle}
\newcommand{\x}{{\bf x}}
\newcommand{\bv}{{\bf v}}
\newcommand{\bn}{{\bf n}}
\newcommand{\bP}{{\bf P}}
\newcommand{\bmm}{{\bf m}}
\newcommand{\bN}{{\bf N}}
\newtheorem{Theorem}{Theorem}[section]
\newtheorem{Lemma}{Lemma}[section]
\newtheorem{Remark}{Remark}[section]
\newtheorem*{Assumption*}{Assumption}
\newtheorem{Problem}{Problem}[section]
\newtheorem*{Problem*}{Problem}
\numberwithin{equation}{section}
\begin{document}

\title{A Carleman-Picard approach for reconstructing zero-order coefficients in parabolic equations with limited data}

\author{
Ray Abney\thanks{Department of Mathematics and Statistics, University of North Carolina at
Charlotte, Charlotte, NC 28223, USA, \texttt{rgabney@charlotte.edu} }
\and
Thuy T. Le\thanks{%
Department of Mathematics, NC State University, Raleigh, NC 27695, USA, \texttt{tle9@ncsu.edu}}  \and Loc H. Nguyen\thanks{Department of Mathematics and Statistics, University of North Carolina at
Charlotte, Charlotte, NC, 28223, USA, \texttt{loc.nguyen@charlotte.edu}.} 
\and
Cam Peters\thanks{Department of Mathematics and Statistics, University of North Carolina at
Charlotte, Charlotte, NC, 28223, USA,  \texttt{cpeter68@charlotte.edu}.}
}

\date{}
\maketitle
\begin{abstract}
We propose a globally convergent computational technique for the nonlinear inverse problem of reconstructing the zero-order coefficient in a parabolic equation using partial boundary data. This technique is called the ``reduced dimensional method". Initially, we use the polynomial-exponential basis to approximate the inverse problem as a system of 1D nonlinear equations. We then employ a Picard iteration based on the quasi-reversibility method and a Carleman weight function. We will rigorously prove that the sequence derived from this iteration converges to the accurate solution for that 1D system without requesting a good initial guess of the true solution. The key tool for the proof is a Carleman estimate. We will also show some numerical examples.
\end{abstract}

\noindent{\it Keywords:
	time reduction,
	Carleman Picard iteration,
	nonlinear,
	parabolic.
}

\noindent{\it AMS subject classification: 	

}

\section{Introduction} \label{sec intr}

Let $d \geq 2$ be the spatial dimension. 
This paper aims to solve a coefficient inverse problem for the following initial value problem
\begin{equation}
	\left\{
		\begin{array}{rcll}
			u_t(\x, t) &=& \Delta u(\x, t) + c(\x)u(\x, t) &(\x, t) \in \R^d \times (0, \infty), \\
			u(\x, 0) &=& p(\x) &\x \in \R^d.
		\end{array}
	\right.
	\label{main_eqn}
\end{equation}
More precisely, we propose a globally convergent method to solve the following inverse problem.
\begin{Problem}
	Let $R$ and $T$ be two positive numbers.
	Define $\Omega = (-R, R)^{d}$, and	
	\begin{equation}
		\Gamma = \{
		 \x = (\x', z): \x' = (x_1, \dots, x_{d - 1}) \mbox{ and }, |x_i| < R, i = 1, \dots, d-1, z = \pm R\} 
		\subset \partial \Omega.
		\label{gamma}
	\end{equation}
	Assume that $|p| > 0$ in $\overline \Omega$.
	Given the boundary measurements
	\begin{equation}
		f(\x, t) = u(\x, t) 
		\mbox{ and }
		g(\x, t) = u_z(\x, t)
		\label{data}
	\end{equation}
	for all $(\x, t) \in \Gamma \times (0, T)$, compute the coefficient $c(\x),$ for $\x \in \Omega.$
\label{p}
\end{Problem}

Problem \ref{p} boasts countless real-world applications. Consider a scenario where the internal points of the medium $\Omega$ remain inaccessible. By recording partial boundary data of the function $u$, specifically the heat and heat flux as discussed in this paper, over a designated time frame and by resolving Problem \ref{p}, one can identify the coefficient $c(\x)$, $\x \in \Omega$. This allows the examination of the medium without causing any damage to it. An important example can be drawn from bioheat transfer, where the coefficient $c(\x)$ signifies blood perfusion. Understanding this coefficient is vital for determining the temperature of blood coursing through tissue, as highlighted in \cite{CaoLesnic:amm2019}.
However, the uniqueness of Problem \ref{p}, especially when data collection is limited to a specific subset of $\partial \Omega$, remains an open area and is explored within the reduced dimensional framework of this paper. Variations of Problem \ref{p}, with some internal data assumed to be known, have been addressed in \cite{BeilinaKlibanovBook, BukhgeimKlibanov:smd1981, PrilepkoKostin:RASBM1993}. Additionally, the uniqueness can be found in \cite{Isakov:ip1999} when provided with the Dirichlet to Neuman map. In this paper, the uniqueness of Problem \ref{p} is assumed.
Another topic of interest is the inverse challenge of retrieving other coefficients, such as diffusion or initial conditions, based on the final time measurements or boundary measurements for parabolic equations. This is an intriguing and critical issue, with theoretical findings and computational methods elaborated in \cite{AbhishekLeNguyenKhan, klibanovYagola:arxiv2019, LeCON2023, LeNguyen:jiip2022, LiNguyen:IPSE2020, TuanKhoaAu:SIAM2019, Prilepko:pam2000, Tuan:ip2017}.

Inverse problems of computing the coefficients for parabolic equations have been extensively explored. To the authors' knowledge, the widely-used technique for resolving such issues is the optimal control approach; see the important works \cite{Borceaetal:ip2014, CaoLesnic:nmpde2018, CaoLesnic:amm2019, KeungZou:ip1998, YangYuDeng:amm2008} and other cited references. The researchers in \cite{Borceaetal:ip2014} employed the optimal control method with a preconditioner to achieve high-quality numerical calculations of thermal conductivity. However, a significant limitation of this technique is the necessity for a reliable initial estimation of the true solution, which is not consistently accessible.
We would like to particularly highlight the convexification method, as described in \cite{KlibanovNik:ra2017, Klibanov:ip2015, KlibanovNguyenTran:JCP2022,  LeLeNguyen:Arxiv2022}. This approach addresses the challenge of obtaining an initial guess. The studies in\cite{KlibanovNik:ra2017, Klibanov:ip2015, KlibanovNguyenTran:JCP2022,  LeLeNguyen:Arxiv2022} suggested to minimize some Carleman weighted strictly convex functionals. When minimized, the minimizers of these functionals produce the solution to the problem at hand. 
Other worthy mentions are \cite{Nguyen:CAMWA2020} and \cite{Nguyens:jiip2020}, which respectively present alternative approaches to address Problem \ref{p} by iteratively solving a Picard-like approximation and its linearization. 
The approaches above consider the full boundary observation.
Unlike this, our contribution is introducing a fresh technique that does not rely on prior insights into the actual coefficient and requests only partial observation.

Our approach to addressing Problem \ref{p} is split into two phases.
In the initial phase, drawing inspiration from \cite{Nguyen:CAMWA2020, Nguyens:jiip2020}, we eliminate the unknown coefficient 
$c(\x)$ from \eqref{main_eqn}. By this, we obtain a partial differential equation. The equation that emerges from this phase is a complicated one, which involves both nonlocal and nonlinear terms. On the other hand, the boundary condition of the solution is only provided on $\Gamma \subsetneq \partial \Omega$. As of now, there is no established numerical method to address it.
During the subsequent phase, we transform this equation into a system of nonlinear ordinary differential equations. This transformation is guided by truncating the Fourier series with respect to a special basis introduced in paper \cite{NguyenLeNguyenKlibanov:2023}. This basis is named the polynomial-exponential basis.
It is the high-dimensional version of the 1D polynomial-exponential basis originally introduced in \cite{Klibanov:jiip2017}. We then deploy a predictor-corrector strategy to solve this nonlinear system. 
Within this framework, the preliminary approximation of the true solution is derived without any prior understanding. Subsequently, the resolution to Problem \ref{p} is achieved.
The corrector stage in this procedure is executed using the quasi-reversibility method and a Carleman weight function.
The quasi-reversibility method was first introduced by Latt\`es and Lions in \cite{LattesLions:e1969} for numerical solutions of ill-posed problems for
partial differential equations. 
It has been studied intensively since then,
see e.g., \cite{Becacheelal:AIMS2015, Bourgeois:ip2006,
BourgeoisDarde:ip2010, BourgeoisPonomarevDarde:ipi2019, ClasonKlibanov:sjsc2007, Dadre:ipi2016, KaltenbacherRundell:ipi2019,
KlibanovSantosa:SIAMJAM1991, Klibanov:jiipp2013, LocNguyen:ip2019, NguyenLiKlibanov:2019}. A survey
on this method can be found in \cite{Klibanov:anm2015}. 
A question arises immediately whether or not the iteration led by the predictor-corrector procedure above converges.
In this paper, we will rigorously prove this important result.
The proof is motivated by the one in \cite{LeCON2023, LeNguyen:jiip2022, Nguyen:AVM2023}. However, its advantage is that we can relax a technical condition in those papers about the smoothness of the noise.
That means the noise model in this paper is more realistic than in the earlier publications.

The paper is organized as follows.
In Section \ref{sec2}, we introduce our approximation dimensional model that leads to the dimensional reduction approach.
In Section \ref{sec3}, we establish a 1D Carleman estimate.
Section \ref{sec4} is for the algorithm and the proof of its convergence.
In Section \ref{sec5}, we present our numerical study.
Section \ref{sec6} is for concluding remarks.

\section{The reduced dimension model} \label{sec2}

Define 
\begin{equation}
	v(\x, t) = u_t(\x, t)
	\quad 
	\mbox{for all }
	(\x, t) \in \Omega \times (0, T).
	\label{changev}
\end{equation}
Then, by differentiating both sides of the differential equation in \eqref{main_eqn} with respect to $t$, we obtain
\begin{equation}
	v_t(\x, t) = \Delta v(\x, t) + c(\x) v(\x, t)
	\quad 
	\mbox{for all }
	(\x, t) \in \Omega \times (0, T).
	\label{2.2}
\end{equation}
Since $u(\x, 0) = p(\x)$, we have 
\begin{equation}
	v(\x, 0) = 	u_t(\x, 0) = \Delta u(\x, 0) +  c(\x) u(\x, 0)
	=  \Delta p(\x) +  c(\x) p(\x)
	\quad 
	\mbox{for all }
	\x \in \Omega.
	\label{2..3}
\end{equation}
Recall the assumption that $|p(\x)| > 0$ for $\x \in \overline \Omega$. Due to \eqref{2..3}
 \begin{equation}
	c(\x) = \frac{v(\x, 0) - \Delta p(\x)}{p(\x)}
	\quad 
	\mbox{for all }
	\x \in \Omega.
	\label{2.3}
\end{equation}
Plugging $c(\x)$, computed in \eqref{2.3}, into \eqref{2.2} gives
\begin{equation}
	v_t(\x, t) = \Delta v(\x, t) + \frac{v(\x, 0) - \Delta p(\x)}{p(\x)} v(\x, t)
	\quad 
	\mbox{for all }
	(\x, t) \in \Omega \times (0, T).
	\label{2.4}
\end{equation}
Equation \eqref{2.4} is nonlinear and nonlocal.
A theory to solve it is not yet available.
We propose the following dimensional reduction approach to solve it.

\begin{Remark}
	The change of variable in \eqref{changev} and the elimination of $c$ to derive equation \eqref{2.4} were adopted in \cite{Nguyen:CAMWA2020, Nguyens:jiip2020}.
\end{Remark}

For each $n \in \mathbb{N}$, define $\phi_n(t) = t^{n - 1} e^t$ for all $t \in (0, T)$ and $\Phi_n(x) = x^{n - 1}e^x$ for all $x \in (-R, R)$.
The sets $\{\phi_n\}_{n \geq 1}$ and $\{\Phi_n\}_{n \geq 1}$ are complete in $L^2(0, T)$ and $L^2(-R, R)$ respectively.
Applying the Gram-Schmidt orthonormalization process on these two sets, we obtain orthonormal bases $\{\psi_n\}$ and $\{\Psi_n\}$ of $L^2(0, T)$ and $L^2(-R, R)$ respectively.
For each multi-index $\bn = (n_1, \dots , n_{d - 1}, n_t) \in \mathbb{N}^{d}$, define the $d$-dimensional tensor-valued function ${\bf P}_{\bn}$ as
\[
	{\bf P}_{\bf n}(\x', t) 
	= \Psi_{n_1}(x_1) \dots \Psi_{n_{d - 1}}(x_{d - 1})\psi_{n_t}(t) 
\]
for all $\x' = (x_1, \dots, x_{d - 1}) \in (-R, R)^{d-1}, t \in (0, T)$.
It is obvious that the set $\{{\bf P}_{\bf n}: \bf n \in \mathbb{N}^d\}$ is an orthonormal basis of the space $L^2(\Omega' \times (0, T))$.
We name this basis the polynomial-exponential basis.
The 1D version of the polynomial-exponential basis was introduced in \cite{Klibanov:jiip2017}, and the higher dimension version was defined in \cite{NguyenLeNguyenKlibanov:2023}.
\begin{Remark}
The one-dimensional version of the polynomial-exponential basis was employed to solve a list of inverse problems, including  nonlinear inverse problems with simulated and experimental data \cite{VoKlibanovNguyen:IP2020,
Khoaelal:IPSE2021, KhoaKlibanovLoc:SIAMImaging2020, ThuyKhoaKlibanovLocBidneyAstratov:2023,
LeNguyen:JSC2022, Nguyen:CAMWA2020, Nguyens:jiip2020} and linear inverse problems \cite{LeNguyenNguyenPowell:JOSC2021, NguyenKlibanov:ip2022, Nguyens:jiip2020, KlibanovAlexeyNguyen:SISC2019}.
\end{Remark}

From now on, for all $\x \in \Omega$ we write $\x = (\x', z) \in \Omega' \times \R$ where $\x' \in \Omega':= (-R, R)^{d - 1}$ consists of the first $d - 1$ coordinates and $z \in (-R, R)$ is the last coordinate of $\x$.
Then, by expanding the function $v(\x', z, t)$ using the basis $\big\{\bP_{\bn}: \bn \in \mathbb{N}^d\big\}$, we can approximate the function $v(\x', z, t)$ as follows
\begin{align}
	v(\x', z, t) &= \sum_{\bn \in \mathbb{N}^d} v_{\bn}(z) \bP_{\bn}(\x', t)	
	\simeq
	\sum_{\bn \leqq \bN} v_{\bn}(z) \bP_{\bn}(\x', t)
	\nonumber
	\\
	&=
	\sum_{n_1 = 1}^{N_1}
	\dots
	\sum_{n_{d - 1}}^{N_{d - 1}}
	\sum_{n_t = 1}^{N_t}
	v_{(n_1, \dots, n_{d - 1}, n_t)} (z)
	\Psi_{n_1}(x_1) \dots \Psi_{n_{d - 1}}(x_{d - 1})\psi_{n_t}(t)
	\label{2.5}
\end{align}
for $(\x', z, t) \in \Omega' \times (-R, R) \times (0, T)$, 
where $\bN = (N_1, \dots, N_{d - 1}, N_t)$ represents a cut-off vectors
and
\begin{equation}
	v_{\bn}(z) = \int_{\Omega' \times (0, T)} v(\x', z, t) \bP_{\bn}(\x', t)d\x' dt.
	\label{vn}
\end{equation}
The values of the cut-off numbers $N_1, \dots, N_{d- 1},$ and $N_t$ will be chosen based on the given data in \eqref{data}.
See Section \ref{sec52} and Figure \ref{ChooseN} for an illustration of a suitable choice of these numbers.
In \eqref{2.5} and hereafter, we understand $\bn \leqq \bN$ by the statement that
\begin{multline}
	\bn = (n_1, \dots, n_{d- 1}, n_t) \leqq \bN = (N_1, \dots, N_{d- 1}, N_t)
	\mbox{ is equivalent to }
	\\
	1 \leq n_1 \leq N_1, \dots, 1 \leq n_{d - 1} \leq N_{d-1}, \mbox{ and }
	1 \leq n_t \leq N_t.
	\label{order}
\end{multline}

We assume that the approximation \eqref{2.5} is valid. Plugging \eqref{2.5} into \eqref{2.4} gives
\begin{multline}
	\sum_{\bn \leqq \bN} v_{\bn}(z) \frac{\partial \bP_{\bn}(\x', t)}{\partial t}
 =
 	\sum_{\bn \leqq \bN} v_{\bn}''(z) \bP_{\bn}(\x', t)
	+
	\sum_{\bn \leqq \bN} v_{\bn}(z) \Delta_{\x'}\bP_{\bn}(\x', t)
	\\
		+
		\frac{1}{p(\x)}\Big(\sum_{\bn \leqq \bN} v_{\bn}(z) \bP_{\bn}(\x', 0) - \Delta p(\x)\Big) \sum_{\bn \leqq \bN} v_{\bn}(z) \bP_{\bn}(\x', t)
\label{2.6}
\end{multline}
for all $(\x', z, t) \in \Omega' \times (-R, R) \times (0, T).$
For each multi-index $\bmm \leqq \bN$, we multiply $\bP_{\bmm}(\x', t)$ to both sides of \eqref{2.6}, and then integrate the resulting equation over $\Omega' \times (0, T)$ to get
\begin{multline}
	\sum_{\bn \leqq \bN} v_{\bn}(z) \int_{\Omega' \times (0, T)} \frac{\partial \bP_{\bn}(\x', t)}{\partial t} \bP_{\bmm}(\x', t)d\x'dt
	\\
 =
 	\sum_{\bn \leqq \bN} v_{\bn}''(z) \int_{\Omega' \times (0, T)} \bP_{\bn}(\x', t) \bP_{\bmm}(\x', t)d\x'dt
	+
	\sum_{\bn \leqq \bN} v_{\bn}(z)\int_{\Omega' \times (0, T)} \Delta_{\x'}\bP_{\bn}(\x', t) \bP_{\bmm}(\x', t)d\x' dt
	\\
		+
	\frac{1}{p(\x)}\Big(\sum_{\bn \leqq \bN} v_{\bn}(z) \bP_{\bn}(\x', 0) - \Delta p(\x)\Big) 
	\int_{\Omega' \times (0, T)}\sum_{\bn \leqq \bN} v_{\bn}(z) \bP_{\bn}(\x', t) bP_{\bmm}(\x', t)d\x'dt
\label{2.7}
\end{multline}
for all $z \in (-R, R).$
Defining
\begin{equation*}
	s_{\bmm \bn}
	= -\int_{\Omega' \times (0, T)} \frac{\partial \bP_{\bn}(\x', t)}{\partial t} \bP_{\bmm}(\x', t)d\x'dt + \int_{\Omega' \times (0, T)} \Delta_{\x'}\bP_{\bn}(\x', t) \bP_{\bmm}(\x', t)d\x' dt
\end{equation*}
and
\begin{equation}
 {\bf F_{\bmm}}([v_{\bn}(z)]_{\bn \leqq \bN} )
	= \frac{1}{p(\x)}\Big(\sum_{\bn \leqq \bN} v_{\bn}(z) \bP_{\bn}(\x', 0) - \Delta p(\x)\Big) 
 v_{\bmm}(z),
 \label{Fm}
\end{equation}
 we obtain from \eqref{2.7} that
 \begin{equation}
 	v_{\bmm}''(z) + \sum_{\bn \leqq \bN} s_{\bmm \bn} v_{\bn}(z) +  {\bf F_{\bmm}}([v_{\bn}(z)]_{\bn \leqq \bN} ) = 0
	\label{2.8}
 \end{equation}
for all $z \in (-R, R)$ and for all $\bmm \leqq \bN$, see \eqref{order} for the definition of $\leqq$.
Coupling all equations \eqref{2.8} for $\bmm \leqq \bN$ forms a system second-order ordinary equations for the $d$-dimensional valued tensor ${\bf v}(z) = [v_{\bn}(z)]_{\bn \leqq \bN},$ $z \in (-R, R)$.
The Cauchy boundary conditions for the tensor ${\bf v}$ can be derived from \eqref{data} and \eqref{vn}, read as
\begin{equation}
	{\bf v}(\pm R) = [v_{\bn}(\pm R)]_{\bn \leqq \bN}
	= 
	\Big[\int_{\Omega' \times (0, T)} f_t(\x',\pm R,t) \bP_{\bn}(\x', t)dt\Big]_{\bn \leqq \bN}
	\label{2.10}
\end{equation}
and
\begin{equation}
	{\bf v}'(\pm R) = [ v_{\bn}'(\pm R)]_{\bn \leqq \bN}
	= 
	\Big[\int_{\Omega' \times (0, T)} g_t(\x',\pm R,t) \bP_{\bn}(\x',t)dt\Big]_{\bn \leqq \bN}.
	\label{2.11}
\end{equation}
Combining \eqref{2.8}, \eqref{2.10}, and \eqref{2.11}, we obtain a system of Cauchy problem for ${\bf v} = [v_{\bn}(z)]_{\bn \leqq \bN}$
\begin{equation}
	\left\{
	\begin{array}{ll}
		v_{\bmm}''(z) + \sum_{\bn \leqq \bN} s_{\bmm \bn} v_{\bn}(z) +  {\bf F_{\bmm}}({\bf v} ) = 0
		&z \in (-R, R),
		\\
		v_{\bmm}(z) 
	= P_{\bmm}(z)
	\ds &z = \pm R,
	\\
		v_{\bmm}'(z)
	= 
	Q_{\bmm}(z) &z = \pm R,
	\end{array}
	\right.
	\quad \mbox{for } \bmm \leqq \bN.
\label{2.1212}
\end{equation}
where 
\begin{align}
	P_{\bmm}(z) &= \int_{\Omega' \times (0, T)} f_t(\x', z, t) \bP_{\bmm}(\x', t)dt, 
	\label{2.1414}
	\\
	Q_{\bmm}(z) &= \ds\int_{\Omega' \times (0, T)} g_t(\x', z, t) \bP_{\bmm}(\x',t)dt.
	\label{2.15}
\end{align}
Introduce the ``tensor multiplication" operator
\begin{equation*}
	\mathcal{S} :: \bv = \big[\sum_{\bn \leqq \bN} s_{\bmm \bn} v_{\bn}\big]_{\bmm \leqq \bN}
\end{equation*}
and 
the notations
\begin{align*}
	\mathcal{F}(\bv) &= [{\bf F}_{\bmm}(\bv)]_{\bmm \leqq \bN},\\
	\mathcal{P}(z) &= [P_{\bmm}(z)]_{\bmm \leqq \bN}, \quad z = \pm R,\\
	\mathcal{Q}(z) &= [Q_{\bmm}(z)]_{\bmm \leqq \bN}, \quad z = \pm R.
\end{align*}
We shorten the coupling system in \eqref{2.1212} as
\begin{equation}
	\left\{
		\begin{array}{ll}
			\bv''(z) + \mathcal{S}::\bv(z) + \mathcal{F}(\bv(z)) = 0 &z \in (-R, R),\\
			\bv(z) = \mathcal{P}(z) &z = \pm R,\\
			\bv'(z) = \mathcal{Q}(z) &z = \pm R.
		\end{array}
	\right.
	\label{2.12}
\end{equation}

\begin{Remark}
	Computing the values of $\bv$ and $\bv'$ at $z = \pm R$ in \eqref{2.10}, \eqref{2.11}, and \eqref{2.12} requires us to differentiate the given data $f$ and $g$ with respect to the time $t$.
	This task is not trivial, especially when the data are corrupted by noise.
	In this paper, we employ the new differentiating technique in \cite{NguyenLeNguyenKlibanov:2023}, in which we approximate the data by eliminating their high-frequency terms from the Fourier expansion of the given data with respect to the polynomial-exponential basis before differentiating.
	It was numerically shown in \cite{NguyenLeNguyenKlibanov:2023} that computing derivatives using this new technique is more accurate than the conventional ones; say the finite difference, the cubic spline, and the Tikhonov optimization methods.
\end{Remark}

\begin{Remark}
	The first key point of our dimension reduction approach lies in the derivation of the approximation model \eqref{2.12}, a system of first-order ODEs along the $z-$axis.	
	The approximation model \eqref{2.12} involves 
	\begin{equation}
		|\bN| = |(N_1, \dots, N_{d-1}, N_t)|=  N_t\prod_{i = 1}^{d - 1} N_{i}
		\label{bN}
	\end{equation} equations versus the same numbers of unknown entries of ${\bf v} = [v_{\bmm}]_{\bmm \leqq \bN}$.
	 This allows for the computation of the tensor-valued function $\bv(z)$ for $z \in (-R, R)$, and subsequently the function $v(\x', z, t)$ for all $(\x', z, t) \in \Omega' \times (-R, R) \times (0, T)$.
	The solution $c(\x),$ $\x \in \Omega$, to Problem \ref{p} can be computed via the knowledge of $v$ and the reconstruction formula \eqref{2.3}.
	However, this convenience comes with a trade-off.
	The truncation in \eqref{2.4} makes system \eqref{2.12}  not exact. 
	It should be considered as an approximation context for Problem \ref{p}.
	Studying the behavior of \eqref{2.12} when all cut-off numbers $N_2, \dots, N_d, N_t$ tend towards $\infty$ presents a significant challenge. This paper does not cover this complex topic, which prioritizes computational aspects. In exchange, we will show that our dimension reduction method is acceptable in numerics.  It can quickly deliver reliable solutions since we have transferred a high dimensional problem into a problem along the $z-$axis, which is a 1D problem.	
	\label{rem22}
\end{Remark}

As noted in Remark \ref{rem22}, once the system of ODEs in \eqref{2.12} with Cauchy boundary data is solved, the computed solution to Problem \ref{p} follows.
However, this task is challenging since \eqref{2.12} is nonlinear. 
There are several methods to solve nonlinear systems of ODEs.
The conventional approach is based on optimization.
For example, one can solve \eqref{2.12} by minimizing the least squares cost functional
\begin{equation}
	J_{\rm lsq}({\bf v})
	= \int_{-R}^R \big|\bv''(z) + \mathcal{S}::\bv +  \mathcal{F}(\bv)\big|^2 dz + \mbox{a regularization term}
	\label{2.13}
\end{equation}
subject to the endpoint condition in \eqref{2.12} and then accepting the minimizer as the computed solution.
This method is effective when a good initial guess of \eqref{2.12} is given because $J_{\rm lsq}$ might have multiple local minima.  The challenge is that such an initial guess is not always available in practical applications. 
Consequently, the optimization approach is not deemed suitable for solving \eqref{2.12}.
There are three approaches to solve \eqref{2.12} without requesting a good initial guess, all based on Carleman convexification.
\begin{enumerate}
	\item {\it The Carleman convexification method}. The key of the Carleman convexification method is to include a Carleman weight function; e.g., $W_\lambda(z) = e^{-\lambda z}$, $\lambda > 1$,  to the least squares cost functional in \eqref{2.13}. That means one can minimize the Carleman weighted functional
	\begin{equation*}
		J_{\rm conv}({\bf v})
	= \int_{-R}^R W_\lambda(z) \big|\bv''(z) + \mathcal{S}::\bv +  \mathcal{F}(\bv)\big|^2 dz + \mbox{a regularization term},
	\end{equation*}
	subject to the boundary conditions in \eqref{2.12} where ${\bf v} = [v_{\bmm}]_{\bmm \leqq \bN}$.
	One can prove that $J_{\rm conv}$ is uniformly convex in any bounded subset of the functional space containing the desired solution provided that $\lambda$ is sufficiently large.
	Also, the unique minimizer is close to the true solution to \eqref{2.12}. 
	 The original convexification method was first introduced in \cite{KlibanovIoussoupova:SMA1995}, with subsequent results found in \cite{KlibanovNik:ra2017, KhoaKlibanovLoc:SIAMImaging2020, KlibanovNguyenTran:JCP2022, LeNguyen:JSC2022}. Despite its efficacy in producing reliable numerical solutions, the convexification method has a high computational cost.
	\item {\it The Carleman contraction method}.
	The contraction method for solving \eqref{2.12} primarily starts with an initial function ${\bf v}^{(0)} = [v_{\bmm}^{(0)}]_{\bmm \leqq \bN}$. 
	Note that ${\bf v}^{(0)}$ might be far away from the true solution to \eqref{2.12}. From this point, given that ${\bf v}^{(k)} = [v_{\bmm}^{(k)}]_{\bmm \leqq \bN}$, $k \geq 0$, is known, we compute ${\bf v}^{(k + 1)} = [v_{\bmm}^{(k + 1)}]_{\bmm \leqq \bN}$ as the ``Carleman-regularized" solution to
	\begin{equation}
	\left\{
		\begin{array}{ll}
			{\bv^{(k+1)}}''(z) + \mathcal{S}::\bv^{(k + 1)}(z) + \mathcal{F}(\bv^{(k)}(z)) = 0 &z \in (-R, R),\\
			\bv(z) = \mathcal{P}(z) &z = \pm R,\\
			\bv'(z) = \mathcal{Q}(z) &z = \pm R.
		\end{array}
	\right.
	\label{2.14}
\end{equation}
By Carleman-regularized solution, we mean ${\bf v}^{(k+1)}$ is the minimizer of
\begin{equation*}
	J^{(k)}(\bv)
	= \int_{-R}^R W_\lambda(z) \big|\bv''(z) + \mathcal{S}::\bv +  \mathcal{F}(\bv^{(k)})])\big|^2 dz
	 + \mbox{a regularization term}
\end{equation*}
subject to the boundary conditions in \eqref{2.12}.
The choice of $\lambda$, $W_\lambda(z)$, and the regularization term will be specified later.
The procedure to compute ${\bf v}^{(k+1)}$ above  involves the combination of the quasi-reversibility method \cite{LattesLions:e1969} and an appropriate Carleman estimated, as in \cite{LeCON2023, LeNguyen:jiip2022,  Nguyen:AVM2023}.
   Thanks to the presence of the Carleman weight function $W_{\lambda}(z)$, one can follow the arguments in \cite{LeCON2023, LeNguyen:jiip2022, Nguyen:AVM2023} to prove the convergence of the constructed sequence $\big\{{\bf v}^{(k)}\big\}_{k \geq 0}$ to the true solution to \eqref{2.12}. 
    	\item {\it The Carleman-Newton method. } The Carleman-Newton method is similar to the Carleman contraction method. Given an initial solution ${\bf v}^{(0)}$ that can be chosen arbitrary, we find ${\bf v}^{(1)}$ as the Carleman regularized  solution to the linearization of \eqref{2.12} about ${\bf v}^{(0)}$. 
	We refer the reader to \cite{AbhishekLeNguyenKhan, LeNguyenTran:CAMWA2022} for details and the rigorous proof of the convergence due to the Carleman-Newton method.
\end{enumerate}

Among the three methods mentioned above, we will choose the second approach; i.e., we will establish a 1D analog of the Carleman contraction method to solve \eqref{2.12}. This choice is appropriate due to the global convergence, the rapid rate of convergence, and the simplicity of the computational implementation.
In the previous two sentences, we mentioned ``analog" because $\mathcal{F}$ does not satisfy the Lipschitz condition in \cite{Nguyen:AVM2023}, which requires some modification in analysis.

In the next section, we establish a Carleman estimate, which plays an important role in proving the convergence of the Carleman contraction method.

\section{A 1D-Carleman estimate}\label{sec3}

Let $z_0 < -R$ be a fixed number.
We have the lemma.
\begin{Lemma}
There is a number $\lambda_0 > 1$ and a constant $C > 0$ depending only on $R$ and $z_0$ such that
	for all function $w \in C^2([-R, R])$, we have
	\begin{multline}
		\int_{-R}^Re^{2\lambda (z - z_0)^{-2}} |w''(z)|^2 dz
		\geq  
		-Ce^{2\lambda (R-z_0)^{-2}}(\lambda^3 |w(R)|^2 + \lambda |w'(R)|^2) 
		\\
		- C e^{2\lambda (-R-z_0)^{-2}} (\lambda^3 |w(-R)|^2 + \lambda |w'(-R)|^2)
		+C \lambda^3 \int_{-R}^R e^{2\lambda (z - z_0)^{-2}}|w(z)|^2dz		
		\\
		+C \lambda \int_{-R}^R e^{2\lambda (z - z_0)^{-2}} |w'(z)|^2dz.
		\label{Car}
	\end{multline}
	\label{lem31}
\end{Lemma}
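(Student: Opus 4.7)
The plan is to derive \eqref{Car} by the standard Bukhgeim--Klibanov conjugation technique, specialized to the one-dimensional weight $\psi(z):=(z-z_0)^{-2}$. Setting $v(z):=e^{\lambda\psi(z)}w(z)$, a direct computation yields
\begin{equation*}
e^{\lambda\psi}w''=L_\lambda v:=v''-2\lambda\psi'v'+\bigl(\lambda^2(\psi')^2-\lambda\psi''\bigr)v.
\end{equation*}
I would split $L_\lambda v=Pv+Qv$ with the self-adjoint part $Pv:=v''+\bigl(\lambda^2(\psi')^2-\lambda\psi''\bigr)v$ and the skew-symmetric part $Qv:=-2\lambda\psi'v'$, and use the elementary bound $|L_\lambda v|^2\ge 2(Pv)(Qv)$ to reduce the proof to estimating the cross term.

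The core step is to expand $2\int_{-R}^R(Pv)(Qv)\,dz$ and perform repeated integration by parts on each piece. Collecting the interior contributions produces
\begin{equation*}
2\lambda\int_{-R}^R\psi''\,|v'|^2\,dz+6\lambda^3\int_{-R}^R(\psi')^2\psi''\,|v|^2\,dz-\lambda\int_{-R}^R\psi''''\,|v|^2\,dz+B_v(\pm R),
\end{equation*}
where $B_v(\pm R)$ is a polynomial in $\lambda$ with coefficients built from $\psi^{(k)}(\pm R)$ acting on $v(\pm R),v'(\pm R)$. Because $z_0<-R$, the distance $z-z_0$ is bounded above and below by positive constants on $[-R,R]$, so $\psi''=6(z-z_0)^{-4}$ and $(\psi')^2\psi''=24(z-z_0)^{-10}$ admit strictly positive lower bounds depending only on $R$ and $z_0$. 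For $\lambda\ge\lambda_0$ sufficiently large, the two leading integrals dominate the $O(\lambda)$ correction coming from $-\lambda\psi''''|v|^2$.

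It remains to pass from $v$ back to $w$ and to repackage the boundary contributions. From $v^2=e^{2\lambda\psi}w^2$ together with
\begin{equation*}
(v')^2=e^{2\lambda\psi}\bigl((w')^2+2\lambda\psi'ww'+\lambda^2(\psi')^2w^2\bigr),
\end{equation*}
and an integration by parts applied to $2\lambda\psi'ww'=\lambda\psi'(w^2)'$, one obtains interior terms of the form $c\lambda\int e^{2\lambda\psi}|w'|^2\,dz+c\lambda^3\int e^{2\lambda\psi}|w|^2\,dz$ with a constant $c=c(R,z_0)>0$, at the cost of extra boundary contributions at $z=\pm R$. Each such boundary term carries the factor $e^{2\lambda\psi(\pm R)}=e^{2\lambda(\pm R-z_0)^{-2}}$ and, after using $(v'(\pm R))^2\le 2e^{2\lambda\psi(\pm R)}\bigl(|w'(\pm R)|^2+\lambda^2(\psi'(\pm R))^2|w(\pm R)|^2\bigr)$, is bounded by $C\bigl(\lambda^3|w(\pm R)|^2+\lambda|w'(\pm R)|^2\bigr)$ times the corresponding exponential; this matches the right-hand side of \eqref{Car}.

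The main obstacle is the bookkeeping of the cross-term identity, and in particular the choice of the decomposition $L_\lambda=P+Q$. The more symmetric split $Pv=v''+\lambda^2(\psi')^2v$, $Qv=-2\lambda\psi'v'-\lambda\psi''v$ yields a clean dominant $4\lambda^3(\psi')^2\psi''v^2$ in the $v$-variable, but the corresponding $\lambda^3$ coefficient for $w$ is cancelled at leading order when one converts $c\lambda\int(v')^2\,dz$ back via the identity above, because $\max_{[-R,R]}(\psi')^2$ is attained at $z=-R$ while $\min_{[-R,R]}(\psi')^2\psi''$ is attained at $z=R$. The self-adjoint choice above avoids this cancellation and delivers a strictly positive coefficient on $\lambda^3\int e^{2\lambda\psi}|w|^2\,dz$, which is precisely what is needed to close \eqref{Car}.
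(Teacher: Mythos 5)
Your proof is correct, but it follows the classical Bukhgeim--Klibanov route rather than the paper's shortcut. The paper also conjugates with $y=e^{\lambda(z-z_0)^{-2}}w$, but instead of splitting the conjugated operator into symmetric and skew-symmetric parts and expanding the full cross term, it applies the pointwise inequality $(a+b+c)^2\ge 2ab+2bc$ to the three terms of $e^{\lambda\psi}w''$, retaining only the two products that are exact derivatives ($\frac{d}{dz}|y|^2$ and $\frac{d}{dz}|y'|^2$). After one integration by parts this yields a positive $\lambda^3\int|y|^2$ term but a \emph{negative} $-C\lambda\int|y'|^2$ term; the paper then converts back to $w$ crudely via $-|y'|^2\ge-2e^{2\lambda\psi}(4\lambda^2(z-z_0)^{-6}|w|^2+|w'|^2)$ and recovers the missing positive gradient term in a separate third step, by integrating $e^{2\lambda\psi}\frac{d}{dz}|w'|^2$ by parts against the monotone weight. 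Your approach gets both positive interior terms ($2\lambda\int\psi''|v'|^2$ and $6\lambda^3\int(\psi')^2\psi''|v|^2$) in one pass and confronts the genuine subtlety head-on: the conversion of $\lambda\int\psi''|v'|^2$ back to the $w$-variable costs $-2\lambda^3\int(\psi')^2\psi''e^{2\lambda\psi}|w|^2$, and survival depends on $6-2>0$, which your choice of decomposition guarantees; your observation that the fully symmetric split would cancel this exactly is the right warning, even though the mechanism is an exact pointwise algebraic cancellation ($4-4=0$) rather than the max/min mismatch you describe. One bookkeeping slip: with the self-adjoint part $Pv=v''+(\lambda^2(\psi')^2-\lambda\psi'')v$ the lower-order interior correction is $-2\lambda^2\int(\psi'\psi'')'|v|^2$, of order $\lambda^2$, not the $-\lambda\int\psi''''|v|^2$ you wrote (that term belongs to the symmetric split); since $(\psi')^2\psi''=24(z-z_0)^{-10}$ is bounded below on $[-R,R]$, an $O(\lambda^2)$ correction is still absorbed for $\lambda\ge\lambda_0(R,z_0)$, so the argument closes. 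What each approach buys: the paper's is shorter and avoids computing the full commutator, at the price of an extra repair step for the gradient term whose sign bookkeeping is delicate; yours is longer but structurally transparent and makes explicit where the positivity actually comes from.
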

\begin{proof}
	{\it Step 1.} Define 
	\begin{equation}
		y(z) = e^{\lambda (z - z_0)^{-2}} w(z)
		\quad
		\mbox{ or equivalently }
		\quad
		w(z) = e^{-\lambda (z - z_0)^{-2}} y(z)
		\label{3.2}
	\end{equation}
	for all $z \in [-R, R].$
	We have
	\begin{equation}
		w'(z) = e^{-\lambda (z - z_0)^{-2}}\Big[
			2\lambda (z - z_0)^{-3} y(z) + y'(z)
		\Big]
	\end{equation}	
	and
	\begin{equation}
		w''(z) = e^{-\lambda (z - z_0)^{-2}}\Big[
			2\lambda(z - z_0)^{-6}[3(z - z_0)^2 - 2 \lambda] y(z)		
		+ 4\lambda (z - z_0)^{-3} y'(z)
		+ y''(z)
		\Big]
	\end{equation}	
	for all $z \in [-R, R]$.
	Thus,
	\begin{align*}
		e^{2\lambda (z - z_0)^{-2}} |w''(z)|^2
		&= \Big[
			2\lambda(z - z_0)^{-6}[3(z - z_0)^2 - 2 \lambda] y(z)		
		+ 4\lambda (z - z_0)^{-3} y'(z)
		+ y''(z)
		\Big]^2
		\\
		&\geq 16\lambda^2(z - z_0)^{-9}[3(z - z_0)^2 - 2 \lambda] y(z)  y'(z)
		+ 
		8\lambda (z - z_0)^{-3} y'(z) y''(z)
		\\
		&=8\lambda^2(z - z_0)^{-9}[3(z - z_0)^2 - 2 \lambda] \frac{d}{dz}|y(z)|^2
		+ 4\lambda (z - z_0)^{-3} \frac{d}{dz}|y'(z)|^2
	\end{align*}
	for all $z \in [-R, R].$ 
	Here, we have used the inequality $(a + b + c)^2 \geq 2 ab + 2 bc.$
	Thus,
	\begin{equation}
		(z - z_0)^{10}e^{2\lambda (z - z_0)^{-2}} |w''(z)|^2 
		\geq 8\lambda^2 (z - z_0)[3(z - z_0)^2 - 2\lambda] \frac{d}{dz}|y(z)|^2
		+ 4\lambda (z - z_0)^7\frac{d}{dz}|y'(z)|^2
		\label{3.5}
	\end{equation}
	for all $z \in [-R, R].$ 
	By the product rule in differentiation $ab' = (ab)' - a'b$,
	we have
	\begin{multline}
		(z - z_0)^{10}e^{2\lambda (z - z_0)^{-2}} |w''(z)|^2 
		\geq  \frac{d}{dz} \Big[8\lambda^2 (z - z_0)[3(z - z_0)^2 - 2\lambda] |y(z)|^2
		\Big]
		\\
		-
		8\lambda^2  |y(z)|^2  \frac{d}{dz} \Big[(z - z_0)[3(z - z_0)^2 - 2\lambda]\Big]
		+ \frac{d}{dz} \Big[4\lambda (z - z_0)^7|y'(z)|^2\Big]
		\\
		- |y'(z)|^2\frac{d}{dz} \Big[4\lambda (z - z_0)^7\Big]
		\label{3.6}
	\end{multline}
	for all $z \in [-R, R].$ 
	Rearranging terms in \eqref{3.6} and simplifying the resulting inequality, we get
	\begin{multline*}
		(z - z_0)^{10}e^{2\lambda (z - z_0)^{-2}} |w''(z)|^2 
		\geq  \frac{d}{dz} \Big[8\lambda^2 (z - z_0)[3(z - z_0)^2 - 2\lambda] |y(z)|^2
		+ \frac{d}{dz} \Big[4\lambda (z - z_0)^7|y'(z)|^2\Big]
		\Big]
		\\
		-
		8\lambda^2  |y(z)|^2   \Big[9(z - z_0)^2 - 2\lambda\Big]		
		- 28|y'(z)|^2 \Big[\lambda (z - z_0)^6\Big]
	\end{multline*}
	for all $z \in [-R, R].$ 
	Therefore,
	\begin{multline}
		(z - z_0)^{10}e^{2\lambda (z - z_0)^{-2}} |w''(z)|^2 
		\geq  \frac{d}{dz} \Big[8\lambda^2 (z - z_0)[3(z - z_0)^2 - 2\lambda] |y(z)|^2
		+  \Big[4\lambda (z - z_0)^7|y'(z)|^2\Big]
		\Big]
		\\
		+16 \lambda^3 |y(z)|^2		-72\lambda^2(z - z_0)^2 |y(z)|^2
		- 28 \lambda |y'(z)|^2 (z - z_0)^6
		\label{3.7}
	\end{multline}
	for all $z \in [-R, R].$ 
	Integrating \eqref{3.7} over $[-R, R]$ and noting that $\lambda^3 \gg \lambda^2 \gg \lambda$ as $\lambda$ large, we can find a number $\lambda_0 > 1$ and a generic constant $C > 0$, both of which depend only on $z_0$ and $R$, such that
	\begin{multline}
		\int_{-R}^Re^{2\lambda (z - z_0)^{-2}} |w''(z)|^2 dz
		\geq  
		-C(\lambda^3 |y(R)|^2 + \lambda |y'(R)|^2) - C(\lambda^3 |y(-R)|^2 + \lambda |y'(-R)|^2)
		\\
		+C \lambda^3 \int_{-R}^R |y(z)|^2dz		
		-C \lambda \int_{-R}^R |y'(z)|^2dz.
		\label{3.9}
	\end{multline}
for all $\lambda \geq \lambda_0.$	
	
	{\it Step 2.}
	Recall from \eqref{3.2}
	that $y(z) = e^{\lambda (z - z_0)^{-2}} w(z).$ We have
	\[
		y'(z) = e^{\lambda (z - z_0)^{-2}}[-2\lambda (z - z_0)^{-3} w(z) + w'(z)].
	\]
	Thus, by the inequality $-(a + b)^2 \geq -2(a^2 + b^2)$, we have
	\begin{equation}
		-|y'(z)|^2 
		\geq -2e^{2\lambda (z - z_0)^{-2}}
		\Big[
			4\lambda^2(z - z_0)^{-6}|w(z)|^2 + |w'(z)|^2
		\Big]
		\label{3.10}
	\end{equation}
	for all $z \in [-R, R].$
	Combining \eqref{3.9} and \eqref{3.10} and recalling that $C$ is a generic constant depending only on $z_0$ and $R$,
	we have
	\begin{multline}
		\int_{-R}^Re^{2\lambda (z - z_0)^{-2}} |w''(z)|^2 dz
		\geq  
		-Ce^{2\lambda (R-z_0)^{-2}}(\lambda^3 |w(R)|^2 + \lambda |w'(R)|^2) 
		\\
		- C e^{2\lambda (-R-z_0)^{-2}} (\lambda^3 |w(-R)|^2 + \lambda |w'(-R)|^2)
		\\
		+C \lambda^3 \int_{-R}^R e^{2\lambda (z - z_0)^{-2}}|w(z)|^2dz		
		-C \lambda \int_{-R}^R e^{2\lambda (z - z_0)^{-2}} |w'(z)|^2dz.
		\label{3.11}
	\end{multline}
	
	{\it Step 3.}
	Using the inequality $2ab \leq a^2 + b^2$, we have
	\begin{align*}
	\int_{-R}^R e^{2\lambda (z - z_0)^{-2}} \frac{d}{dz}|w'(z)|^2dz 
	&= 
		2\int_{-R}^R e^{2\lambda (z - z_0)^{-2}} w'(z) w''(z)dz
		\\
		&\leq
		\int_{-R}^R
		e^{2\lambda (z - z_0)^{-2}} |w'(z)|^2dz 
		+ 
		\int_{-R}^R
		e^{2\lambda (z - z_0)^{-2}} |w''(z)|^2dz.
	\end{align*}
	Therefore,
	\begin{align*}
		\int_{-R}^R
		e^{2\lambda (z - z_0)^{-2}} |w''(z)|^2dz
		&\geq 
		\int_{-R}^R e^{2\lambda (z - z_0)^{-2}} \frac{d}{dz}|w'(z)|^2dz 
		- \int_{-R}^R e^{2\lambda (z - z_0)^{-2}} |w'(z)|^2dz
		\\
		&=\int_{-R}^R \frac{d}{dz}\Big[ e^{2\lambda (z - z_0)^{-2}} |w'(z)|^2\Big]dz
		- 
		\int_{-R}^R |w'(z)|^2 \frac{d}{dz} e^{2\lambda (z - z_0)^{-2}} dz
		\\
		&\hspace{6.5cm}
		- \int_{-R}^R e^{2\lambda (z - z_0)^{-2}} |w'(z)|^2dz.
	\end{align*}
	As a result,
	\begin{multline}
		\int_{-R}^R
		e^{2\lambda (z - z_0)^{-2}} |w''(z)|^2dz
		\geq 
		-C \Big[
			e^{2\lambda (R - z_0)^{-2}} |w'(R)|^2 + e^{2\lambda (-R - z_0)^{-2}} |w'(-R)|^2
		\Big]
		\\
		+ 
		2\lambda \int_{-R}^R (z - z_0)^{-3} e^{2\lambda (z - z_0)^{-2}} |w'(z)|^2   dz
		- \int_{-R}^R e^{2\lambda (z - z_0)^{-2}} |w'(z)|^2dz.
		\label{3.12}
	\end{multline}
	Adding \eqref{3.11} and \eqref{3.12} and recalling that $\lambda \geq \lambda_0 \gg 1$, 
	we obtain \eqref{Car}.
	
\end{proof}

\section{A Picard-like iteration to solve \eqref{2.12}}\label{sec4}

In this section, we employ the Carleman estimate in Lemma \ref{lem31} to construct a sequence that converges to the solution to \eqref{2.12}, provided that this true solution exists.
We consider the circumstance that the boundary data $\mathcal{P}$ and $\mathcal{Q}$ of \eqref{2.12} contain noise. 
Let $\mathcal{P}^*$ and $\mathcal{Q}^*$ be the unknown exact values of the boundary data $\mathcal{P}$ and $\mathcal{Q}$, respectively.
Let 	$\bv^*$ be the solution to \eqref{2.12} with $\mathcal{P}$ and $\mathcal{Q}$ being replaced by $\mathcal{P}^*$ and $\mathcal{Q}^*$, respectively. That means, $\bv^*$ solves
\begin{equation}
	\left\{
		\begin{array}{ll}
			{\bv^*}''(z) + \mathcal{S}::\bv^*(z) + \mathcal{F}(\bv^*(z)) = 0 &z \in (-R, R),\\
			\bv^*(z) = \mathcal{P}^*(z) &z = \pm R,\\
			{\bv^*}'(z) = \mathcal{Q}^*(z) &z = \pm R.
		\end{array}
	\right.
	\label{nonoise}
\end{equation}
In this section, we assume the existence of the solution $\bv^*$ to \eqref{nonoise}.
We now consider the case when noise is introduced to the data.
Let $\delta > 0$ be the noise level. 
That means,
\begin{equation}
	\max_{z \in \{-R, R\}}\big\{|\mathcal{P}(z) - \mathcal{P}^*(z), |\mathcal{Q}(z) - \mathcal{Q}^*(z)|\big\} < \delta.
	\label{noisedelta}
\end{equation}
\begin{Remark}	[Noise model]
	In this section, for simplicity, we assume that noise is introduced into the indirect data 
$\mathcal{P}(\pm R)$ and $\mathcal{Q}(\pm R)$ as in \eqref{noisedelta}  rather than to the direct data, 
$f$ and $g$. This assumption serves theoretical purposes only. In our computational study, we study the more realistic case where the direct data $f^*$ and $g^*$ are impacted by noise as in \eqref{5.4} and \eqref{5.5}.
 Recall that the the entries $P_{\bmm}(\pm R)$ and $Q_{\bmm}(\pm R)$ of indirect data
$\mathcal{P}(\pm R)$ and $\mathcal{Q}(\pm R)$ are computed by the knowledge of the derivatives of $f$ and $g$ via \eqref{2.1414} and \eqref{2.15}. 
Given that differentiating noisy data presents significant challenges and can greatly amplify errors, even minor noise in 
$f$
and 
$g$
can lead to substantial inaccuracies in 
$P_{\bmm}(\pm R)$ and $Q_{\bmm}(\pm R)$. To address this issue, we employ a novel differentiation approach as presented in \cite{NguyenLeNguyenKlibanov:2023}. In \cite{NguyenLeNguyenKlibanov:2023}, this method has been demonstrated to have superior stability compared to traditional methods like finite difference, cubic splines, or the Tikhonov regularization technique.
\end{Remark}
Consider the space of admissible solutions
\[
	H = \Big\{
		\bm{\varphi} = [\varphi_{\bmm}]_{\bmm \leqq \bN}: \varphi_{\bmm} \in H^2(-R, R)  \mbox{ for all } \bmm \leqq \bN
	\Big\}  = H^2(-R, R)^{|\bN|}
\]
where $|\bN|$ is as in \eqref{bN}.
Fix an arbitrary large number $M > 0$, define the close ball in $H$
\begin{equation}
	B_M = \{\bm{\varphi}: \|\bm{\varphi}\|_{H^2(-R, R)^{|\bN|}} \leq M\}.
\end{equation}
For each ${\bf v}  \in B_M$, define the functional
\begin{multline}
	J_{\bf v}^{(\lambda, \epsilon)}(\bm{\varphi})
	= 
	\int_{-R}^R e^{2\lambda (z - z_0)^{-2}}\Big|\bm{\varphi}'' + \mathcal{S}::\bm{\varphi}
	+\mathcal{F}(\bv)\Big|^2dz
	+ \lambda^4 e^{2\lambda (R - z_0)^{-2}} \left|\bm{\varphi}(R) - \mathcal{P}(R)\right|^2
	\\
	+
	 \lambda^4 e^{2\lambda (-R - z_0)^{-2}} \left|\bm{\varphi}(-R) - \mathcal{P}(-R)\right|^2
	 + \lambda^4 e^{2\lambda (R - z_0)^{-2}} \left|\bm{\varphi}'(R) - \mathcal{Q}(R)\right|^2
	 \\
	+
	 \lambda^4 e^{2\lambda (-R - z_0)^{-2}} \left|\bm{\varphi}'(-R) - \mathcal{Q}(-R)\right|^2
	+ \epsilon\| \bm{\varphi}\|_{H^2(-R, R)^{|\bN|}}^2
	\label{3.13}
\end{multline}
for all $\bm{\varphi}$
where $\lambda \geq \lambda_0$ and $\epsilon > 0$ will be chosen later.
For all ${\bf v} \in B_M$, the functional $J_{\bf v}^{(\lambda, \epsilon)}$ is uniformly convex in the close and convex set $B_M$ of $H$.
It has a unique minimizer. We define the map
$\Phi^{(\lambda, \epsilon)}: B_M \to B_M$ that sends ${\bf v}$ to such a minimizer. More precisely,
\begin{equation}
	\Phi^{(\lambda, \epsilon)}({\bf v})
	=
\underset{\bm{\varphi} \in B_M}{{\rm argmin}} J_{\bf v}^{(\lambda, \epsilon)}(\bm{\varphi})
\quad \mbox{for all }
{\bf v} \in B_M.
\label{optimization}
\end{equation}
We define the sequence $\{\bv^{(k)}\}_{k \geq 0}$ as follows:
\begin{equation}
	\left\{
		\begin{array}{ll}
			\bv^{(0)} \mbox{ chosen arbitrarily in } B_M 
			\\
			\bv^{(k+1)} = \Phi^{(\lambda, \epsilon)}(\bv^{(k)}) \quad k \geq 0.
		\end{array}
	\right.
	\label{sequence_v}
\end{equation}
The following theorem guarantees the convergence of the sequence $\{\bv^{(k)}\}_{k \geq 0}$ to $\bv^*$.
\begin{Theorem}	
	Let $M$ be a large number such that both $\bv^*$ and $\bv^{(0)}$ are in $B_M$.
	Let $\lambda_0$ be the number in Lemma \ref{lem31}. 
	Then, there exists $\lambda_1 > \lambda_0$ depending only on $M,$ $p$, $N,$ $R$, and $[{\bf P}_{\bmm \leqq \bN}]$ such that
	\begin{multline}
	\int_{-R}^R e^{2\lambda (z - z_0)^{-2}}|\bv^{(k+1)} - \bv^*|^2dz
	\leq
	\Big(\frac{C}{\lambda^3}\Big)^{k+1}
	\int_{-R}^R  e^{2\lambda(z - z_0)^{-2}}
	\big|\bv^{(0)} - \bv^*\big|^2dz
	\\
	+ \frac{C/\lambda^3}{1 - C/\lambda^3}
	\Big[
		\lambda^4e^{2\lambda(R - z_0)^{-2}}|\mathcal{P}(R) -\mathcal{P}^*(R))|^2
	+ \lambda^4e^{2\lambda(-R - z_0)^{-2}}|\mathcal{P}(-R) - \mathcal{P}^*(-R)|^2
	\\
	+\lambda^4e^{2\lambda(R - z_0)^{-2}}|\mathcal{Q}(R) -\mathcal{Q}^*(R)|^2
	+ \lambda^4e^{2\lambda(-R - z_0)^{-2}}|\mathcal{Q}(-R) -  \mathcal{Q}^*(-R)|^2
	+\epsilon\| \bv^{*}\|_{H^2(-R, R)^{|\bN|}}^2
	\Big]
	\label{4.19}
\end{multline}
for all $k \geq  0.$
\label{thmPicard}	
\end{Theorem}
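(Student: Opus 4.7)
The plan is to establish the one-step contraction
\begin{equation*}
\int_{-R}^R e^{2\lambda(z-z_0)^{-2}}|\bv^{(k+1)}-\bv^*|^2\, dz \leq \tfrac{C}{\lambda^3}\int_{-R}^R e^{2\lambda(z-z_0)^{-2}}|\bv^{(k)}-\bv^*|^2\, dz + \tfrac{C}{\lambda^3}\,\mathfrak{N},
\end{equation*}
where $\mathfrak{N}$ abbreviates the bracketed noise-plus-regularization expression on the right of \eqref{4.19}, and then to iterate it $k+1$ times, so that the geometric sum $\sum_{j=0}^{k}(C/\lambda^3)^j \leq 1/(1-C/\lambda^3)$ reproduces \eqref{4.19}. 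Three ingredients will drive the argument: the first-order optimality condition satisfied by $\bv^{(k+1)}=\Phi^{(\lambda,\epsilon)}(\bv^{(k)})$, the Carleman estimate of Lemma \ref{lem31}, and the observation that although the nonlinearity $\mathcal{F}$ in \eqref{Fm} is not globally Lipschitz, its restriction to the closed ball $B_M$ is Lipschitz with a constant $K=K(M,p,\bN,\{\bP_{\bn}\})$, because each component of $\mathcal{F}(\bv)$ is a quadratic polynomial in the entries of $\bv$ whose coefficients are uniformly bounded in terms of $p^{-1},\Delta p,$ and $\{\bP_{\bn}\}$ on $\overline\Omega\times[0,T]$.

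For the one-step estimate, uniform convexity of $J_{\bv^{(k)}}^{(\lambda,\epsilon)}$ on the closed convex set $B_M$ together with $\bv^* \in B_M$ yields the variational inequality $DJ_{\bv^{(k)}}^{(\lambda,\epsilon)}(\bv^{(k+1)})[\bv^*-\bv^{(k+1)}] \geq 0$. Writing $\bw := \bv^{(k+1)}-\bv^*$ and using that $\bv^*$ solves \eqref{nonoise}, the PDE residual inside $J_{\bv^{(k)}}^{(\lambda,\epsilon)}$ rewrites as $(\bv^{(k+1)})''+\mathcal{S}::\bv^{(k+1)}+\mathcal{F}(\bv^{(k)}) = \bw''+\mathcal{S}::\bw+[\mathcal{F}(\bv^{(k)})-\mathcal{F}(\bv^*)]$, while each boundary evaluation splits as $\bv^{(k+1)}(\pm R)-\mathcal{P}(\pm R) = \bw(\pm R)-(\mathcal{P}(\pm R)-\mathcal{P}^*(\pm R))$ and analogously for $\bv'$. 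After computing the first variation, applying Cauchy--Schwarz and Young's inequality, and absorbing the portion linear in $\bw$ into the left-hand side, the variational inequality produces
\begin{equation*}
\int_{-R}^R e^{2\lambda(z-z_0)^{-2}}|\bw''+\mathcal{S}::\bw|^2\, dz + \sum_{\pm}\lambda^4 e^{2\lambda(\pm R-z_0)^{-2}}\bigl(|\bw(\pm R)|^2+|\bw'(\pm R)|^2\bigr) \leq C\int_{-R}^R e^{2\lambda(z-z_0)^{-2}}|\mathcal{F}(\bv^{(k)})-\mathcal{F}(\bv^*)|^2\, dz + C\mathfrak{N}.
\end{equation*}

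To close the estimate, the inequality $(a+b)^2 \geq \tfrac{1}{2}a^2-b^2$ combined with the pointwise bound $|\mathcal{S}::\bw|^2 \leq C|\bw|^2$ lets me replace $|\bw''+\mathcal{S}::\bw|^2$ by $\tfrac{1}{2}|\bw''|^2$ minus a $C|\bw|^2$ correction, after which Lemma \ref{lem31} is applied componentwise to $\bw$. Its dominant contribution $C\lambda^3 \int_{-R}^R e^{2\lambda(z-z_0)^{-2}}|\bw|^2\,dz$ absorbs that correction for $\lambda$ large, and the Carleman boundary remainders $\lambda^3 e^{2\lambda(\pm R-z_0)^{-2}}|\bw(\pm R)|^2$ and $\lambda e^{2\lambda(\pm R-z_0)^{-2}}|\bw'(\pm R)|^2$ are dominated by the $\lambda^4$-weighted boundary terms on the left. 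Invoking the local Lipschitz bound $|\mathcal{F}(\bv^{(k)})(z)-\mathcal{F}(\bv^*)(z)| \leq K|\bv^{(k)}(z)-\bv^*(z)|$ on the right then yields
\begin{equation*}
C\lambda^3 \int_{-R}^R e^{2\lambda(z-z_0)^{-2}}|\bw|^2\,dz \leq CK^2 \int_{-R}^R e^{2\lambda(z-z_0)^{-2}}|\bv^{(k)}-\bv^*|^2\, dz + C\mathfrak{N}.
\end{equation*}
Choosing $\lambda_1 \geq \lambda_0$ so large that $CK^2/\lambda_1^3 < 1$ and dividing by $C\lambda^3$ gives the one-step contraction, and a straightforward induction on $k$ combined with the geometric-series bound delivers \eqref{4.19}.

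The principal technical obstacle lies in the twofold bookkeeping of the cross term between $\bw''$ and $\mathcal{S}::\bw$ and of the fact that $\mathcal{F}$, being quadratic, is only locally Lipschitz. This is precisely the step at which the nonstandard feature of the present setting intervenes: $\mathcal{F}$ fails the global Lipschitz hypothesis used in \cite{Nguyen:AVM2023}, and its local Lipschitz constant on $B_M$ depends on $M$, which is why the threshold $\lambda_1$ must depend on $M$ and the analysis is confined to $B_M$ under the standing assumption that both $\bv^{(0)}$ and $\bv^*$ lie in $B_M$.
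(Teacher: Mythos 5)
Your proposal is correct and follows essentially the same route as the paper's proof: the variational inequality for the minimizer combined with the exact equation for $\bv^*$, the splitting of the residual into $\bw''+\mathcal{S}::\bw$ plus the $\mathcal{F}$-difference, the inequality $(a+b)^2\geq \tfrac12 a^2-b^2$, the componentwise application of Lemma \ref{lem31}, the local Lipschitz bound on $B_M$ (which the paper justifies via the Sobolev embedding $H^2(-R,R)\hookrightarrow C([-R,R])$, matching your quadratic-polynomial observation), and the iteration with the geometric series. No substantive differences or gaps.
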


\begin{Remark}
	Theorem \ref{thmPicard} and its proof are stated and proved using similar arguments in \cite{LeCON2023, LeNguyen:jiip2022, Nguyen:AVM2023}. However, we still need some important modifications:
	\begin{enumerate}
		\item The nonlinearity 
$\mathcal{F}$ in \cite{LeCON2023, LeNguyen:jiip2022, Nguyen:AVM2023} needs to satisfy the Lipschitz condition. However, the function 
$\mathcal{F}$ in the current work does not meet this requirement. To address this issue, it is necessary to confine the computational domain to a bounded set 
$B_M$ for an arbitrarily large number $M$.
Within this bounded domain, the Lipschitz condition is automatically satisfied.
		\item In \cite{LeCON2023, LeNguyen:jiip2022}, the analysis of noise was not explored, whereas it was somewhat examined in \cite{Nguyen:AVM2023}. By ``somewhat," it means that in \cite{Nguyen:AVM2023}, a technical condition had to be imposed. The noise in the Dirichlet observations and the noise in the Neumann measurements are not independent. Specifically, it was assumed that the noise in the Dirichlet observation is the trace of a function, and the noise in the Neumann measurement needs to be the trace of that function's normal derivative. Given that this circumstance is somewhat impractical, we opt to relax it in the present paper.
	\end{enumerate}
\end{Remark}

\begin{proof}[Proof of Theorem \ref{thmPicard}]	
In the proof, we will employ the dot product 
\[
	\bm{\varphi} \cdot {\bf h} = \sum_{\bmm \leqq \bN} \varphi_{\bmm} h_{\bmm}
\]
for all $\bm{\varphi} = [\varphi_{\bmm}]_{\bmm \leq \bN}$ and ${\bf h}= [h_{\bmm}]_{\bmm \leq \bN}$ in $H$.
Fix $k \geq 0$. 
Set
\begin{equation*}
	{\bf h} = \bv^{(k+1)} - \bv^*.
\end{equation*}
Since $\bv^{(k + 1)} $ is the minimizer of $J^{\lambda, \epsilon}_{\bv^{(k)}}$ in $B_M$ and $\bv^*$ is in the interior of $B_M$, we have
\begin{multline}
	\int_{-R}^R e^{2\lambda(z - z_0)^{-2}}
	\big( {\bv^{(k+1)}}''(z)	
	+ \mathcal{S}::\bv^{(k + 1)}(z)
	+ \mathcal{F}(\bv^{(k)})\big)\cdot
	\big(
		{\bf h}''(z)+ \mathcal{S}::{\bf h}(z)
	\big)dz
	\\
	+
	\lambda^4e^{2\lambda(R - z_0)^{-2}}(\bv^{(k+1)}(R) - \mathcal{P}(R))\cdot{\bf h}(R)
	+
	\lambda^4e^{2\lambda(-R - z_0)^{-2}}(\bv^{(k+1)}(-R) - \mathcal{P}(-R))\cdot{\bf h}(-R)
	\\
	\lambda^4e^{2\lambda(R - z_0)^{-2}}({\bv^{(k+1)}}'(R) - \mathcal{Q}(R))\cdot{\bf h}'(R)
	+
	\lambda^4e^{2\lambda(-R - z_0)^{-2}}({\bv^{(k+1)}}'(-R) - \mathcal{Q}(-R))\cdot{\bf h}'(-R)
	\\
	+\epsilon\langle \bv^{(k+1)}, {\bf h}\rangle_{H^2(-R, R)^{|\bN|}} \leq 0.
	\label{4.6}
\end{multline}
On the other hand, since $\bv^*$ is the true solution to \eqref{nonoise}, we have
\begin{multline}
	\int_{-R}^R e^{2\lambda(z - z_0)^{-2}}
	\big( {\bv^{*}}''(z)	
	+ \mathcal{S}::\bv^{*}(z)
	+ \mathcal{F}(\bv^{*})\big)\cdot
	\big(
		{\bf h}''(z)+ \mathcal{S}::{\bf h}(z)
	\big)dz
	\\
	+
	\lambda^4e^{2\lambda(R - z_0)^{-2}}(\bv^{*}(R) - \mathcal{P}^*(R))\cdot{\bf h}(R)
	+
	\lambda^4e^{2\lambda(-R - z_0)^{-2}}(\bv^{*}(-R) - \mathcal{P}^*(-R))\cdot{\bf h}(-R)
	\\
	\lambda^4e^{2\lambda(R - z_0)^{-2}}({\bv^{*}}'(R) - \mathcal{Q}^*(R))\cdot{\bf h}'(R)
	+
	\lambda^4e^{2\lambda(-R - z_0)^{-2}}({\bv^{*}}'(-R) - \mathcal{Q}^*(-R))\cdot{\bf h}'(-R)
	\\
	+\epsilon\langle \bv^{*}, {\bf h}\rangle_{H^2(-R, R)^{|\bN|}} = \epsilon\langle \bv^{*}, {\bf h}\rangle_{H^2(-R, R)^{|\bN|}}.
	\label{4.7}
\end{multline}
Subtracting \eqref{4.6} from \eqref{4.7} gives
\begin{multline}
	\int_{-R}^R e^{2\lambda(z - z_0)^{-2}}
	\big( {\bv^{(k+1)}}''(z)	 - {\bv^{*}}''(z)	
	+ \mathcal{S}::(\bv^{(k + 1)}(z) - {\bv^{*}}''(z)	)
	+ \mathcal{F}(\bv^{(k)}) - \mathcal{F}(\bv^{*})\big)\cdot
	\big(
		{\bf h}''(z)+ \mathcal{S}::{\bf h}(z)
	\big)dz
	\\
	+
	\lambda^4e^{2\lambda(R - z_0)^{-2}}(\bv^{(k+1)}(R) - \bv^{*}(R) )\cdot{\bf h}(R)
	+
	\lambda^4e^{2\lambda(-R - z_0)^{-2}}(\bv^{(k+1)}(-R) - \bv^{*}(-R))\cdot{\bf h}(-R)
	\\
	+
	\lambda^4e^{2\lambda(R - z_0)^{-2}}({\bv^{(k+1)}}'(R) - {\bv^{*}}'(R) )\cdot{\bf h}'(R)
	+
	\lambda^4e^{2\lambda(-R - z_0)^{-2}}({\bv^{(k+1)}}'(-R) - {\bv^{*}}'(-R))\cdot{\bf h}'(-R)
		\\
	+\epsilon\langle \bv^{(k+1)} - \bv^*, {\bf h}\rangle_{H^2(-R, R)^{|\bN|}} 
	\\
	\leq 
	\lambda^4e^{2\lambda(R - z_0)^{-2}}(\mathcal{P}(R) -\mathcal{P}^*(R))\cdot{\bf h}(R)
	+ \lambda^4e^{2\lambda(-R - z_0)^{-2}}(\mathcal{P}(-R) - \mathcal{P}^*(-R))\cdot{\bf h}(-R)
	\\
	+\lambda^4e^{2\lambda(R - z_0)^{-2}}(\mathcal{Q}(R) -\mathcal{Q}^*(R))\cdot{\bf h}'(R)
	+ \lambda^4e^{2\lambda(-R - z_0)^{-2}}(\mathcal{Q}(-R) -  \mathcal{Q}^*(-R))\cdot{\bf h}'(-R)
	\\
	-\epsilon\langle \bv^{*}, {\bf h}\rangle_{H^2(-R, R)^{|\bN|}}.
	\label{4.8}
\end{multline}
Recalling ${\bf h} = \bv^{(k + 1)} - \bv^*$, we have
\begin{multline}
	\int_{-R}^R e^{2\lambda(z - z_0)^{-2}}
	\big| {\bf h}''(z)	
	+ \mathcal{S}::{\bf h}(z)
	\big|^2dz
	+
	\lambda^4e^{2\lambda(R - z_0)^{-2}}\ 
	|{\bf h}(R)|^2
	+
	\lambda^4e^{2\lambda(-R - z_0)^{-2}}|{\bf h}(-R)|^2	
	\\
	+
	\lambda^4e^{2\lambda(R - z_0)^{-2}}|{\bf h}'(R)|^2	+
	\lambda^4e^{2\lambda(-R - z_0)^{-2}}|{\bf h}'(-R)|^2
	+\epsilon \|{\bf h}\|_{H^2(-R, R)^{|\bN|}}^2 
	\\
	\leq 
	-\int_{-R}^R  e^{2\lambda(z - z_0)^{-2}}\big(
	\mathcal{F}(\bv^{(k)}) - \mathcal{F}(\bv^{*})\big)\cdot
	\big(
		{\bf h}''(z)+ \mathcal{S}::{\bf h}(z)
	\big)dz
	+
	\lambda^4e^{2\lambda(R - z_0)^{-2}}(\mathcal{P}(R) -\mathcal{P}^*(R))\cdot{\bf h}(R)
	\\
	+ \lambda^4e^{2\lambda(-R - z_0)^{-2}}(\mathcal{P}(-R) - \mathcal{P}^*(-R))\cdot{\bf h}(-R)
	+\lambda^4e^{2\lambda(R - z_0)^{-2}}(\mathcal{Q}(R) -\mathcal{Q}^*(R))\cdot{\bf h}'(R)
	\\
	+ \lambda^4e^{2\lambda(-R - z_0)^{-2}}(\mathcal{Q}(-R) -  \mathcal{Q}^*(-R))\cdot{\bf h}'(-R)
	-\epsilon\langle \bv^{*}, {\bf h}\rangle_{H^2(-R, R)^{|\bN|}}.
	\label{4.10}
\end{multline}
Rearranging terms \eqref{4.10} and using the inequality $|ab| \leq \frac{1}{2}(a^2 + b^2)$, we have
\begin{multline}
	\int_{-R}^R e^{2\lambda(z - z_0)^{-2}}
	\big| {\bf h}''(z)	
	+ \mathcal{S}::{\bf h}(z)
	\big|^2dz
	+
	\lambda^4e^{2\lambda(R - z_0)^{-2}}\ 
	|{\bf h}(R)|^2
	+
	\lambda^4e^{2\lambda(-R - z_0)^{-2}}|{\bf h}(-R)|^2	
	\\
	+
	\lambda^4e^{2\lambda(R - z_0)^{-2}}|{\bf h}'(R)|^2	+
	\lambda^4e^{2\lambda(-R - z_0)^{-2}}|{\bf h}'(-R)|^2
	+\epsilon \|{\bf h}\|_{H^2(-R, R)^{|\bN|}}^2 
	\\
	\leq 
	-\int_{-R}^R  e^{2\lambda(z - z_0)^{-2}}
	\big|\mathcal{F}(\bv^{(k)}) - \mathcal{F}(\bv^{*})\big|^2dz
	+
	\lambda^4e^{2\lambda(R - z_0)^{-2}}|\mathcal{P}(R) -\mathcal{P}^*(R))|^2
	\\
	+ \lambda^4e^{2\lambda(-R - z_0)^{-2}}|\mathcal{P}(-R) - \mathcal{P}^*(-R)|^2
	+\lambda^4e^{2\lambda(R - z_0)^{-2}}|\mathcal{Q}(R) -\mathcal{Q}^*(R)|^2
	\\
	+ \lambda^4e^{2\lambda(-R - z_0)^{-2}}|\mathcal{Q}(-R) -  \mathcal{Q}^*(-R)|^2
	+\epsilon\| \bv^{*}\|_{H^2(-R, R)^{|\bN|}}^2.
	\label{4.11}
\end{multline}
Apply the inequality $(a + b)^2 \geq \frac{1}{2}a^2 - b^2$ for the first term of \eqref{4.11}.
We have
\begin{multline}
	\frac{1}{2}\int_{-R}^R e^{2\lambda(z - z_0)^{-2}}
	\big| {\bf h}''(z)	
	\big|^2dz
	+
	\lambda^4e^{2\lambda(R - z_0)^{-2}}\ 
	|{\bf h}(R)|^2
	+
	\lambda^4e^{2\lambda(-R - z_0)^{-2}}|{\bf h}(-R)|^2	
	\\
	+
	\lambda^4e^{2\lambda(R - z_0)^{-2}}|{\bf h}'(R)|^2	+
	\lambda^4e^{2\lambda(-R - z_0)^{-2}}|{\bf h}'(-R)|^2
	+\epsilon \|{\bf h}\|_{H}^2 
	\leq 
	2\int_{-R}^R  e^{2\lambda(z - z_0)^{-2}} |\mathcal{S}::{\bf h}|^2dz
	\\
	-\int_{-R}^R  e^{2\lambda(z - z_0)^{-2}}
	\big|\mathcal{F}(\bv^{(k)}) - \mathcal{F}(\bv^{*})\big|^2dz
	+
	\lambda^4e^{2\lambda(R - z_0)^{-2}}|\mathcal{P}(R) -\mathcal{P}^*(R))|^2
	\\
	+ \lambda^4e^{2\lambda(-R - z_0)^{-2}}|\mathcal{P}(-R) - \mathcal{P}^*(-R)|^2
	+\lambda^4e^{2\lambda(R - z_0)^{-2}}|\mathcal{Q}(R) -\mathcal{Q}^*(R)|^2
	\\
	+ \lambda^4e^{2\lambda(-R - z_0)^{-2}}|\mathcal{Q}(-R) -  \mathcal{Q}^*(-R)|^2
	+\epsilon\| \bv^{*}\|_{H^2(-R, R)^{|\bN|}}^2.
	\label{4.12}
\end{multline}

Since $B_M$ is bounded in $H$, by the Sobolev embedding theorem in 1D, $B_M$ is bounded in $C([-R, R])^{|\bN|}$.
We can find a number $C$ depending only on $M$, $\mathcal{F}$ (and hence $p,$ $N$, $R$, $[{\bf P}_{\bmm}]_{\bmm \leqq \bN}$) such that
\begin{equation}
|\mathcal{F}(\bv^{(k)}(z)) - \mathcal{F}(\mathcal{\bv^*})(z)|
	\leq 
	C |\bv^{(k)}(z) -\mathcal{\bv^*}(z)|
	\quad
	\mbox{for all } z \in (-R, R).
	\label{4.13}
\end{equation}
Combining \eqref{4.12} and \eqref{4.13}, we can find a constant $C$ depending only $M$, $p$, $N$, $R$, $[{\bf P}_{\bmm}]_{\bmm \leqq \bN}$ such that
\begin{multline}
	\int_{-R}^R e^{2\lambda(z - z_0)^{-2}}
	\big| {\bf h}''(z) \big|^2dz
	+ \lambda^4e^{2\lambda(R - z_0)^{-2}}\ 
	|{\bf h}(R)|^2
	+
	\lambda^4e^{2\lambda(-R - z_0)^{-2}}|{\bf h}(-R)|^2	
	\\
	+
	\lambda^4e^{2\lambda(R - z_0)^{-2}}|{\bf h}'(R)|^2	+
	\lambda^4e^{2\lambda(-R - z_0)^{-2}}|{\bf h}'(-R)|^2
	+\epsilon \|{\bf h}\|_{H^2(-R, R)^{|\bN|}}^2 
	\\
	\leq C \Big[
		\int_{-R}^R  e^{2\lambda(z - z_0)^{-2}}
	\big|{\bf h}\big|^2dz
	+
	\int_{-R}^R  e^{2\lambda(z - z_0)^{-2}}
	\big|\bv^{(k)} - \bv^*\big|^2dz
	+
	\lambda^4e^{2\lambda(R - z_0)^{-2}}|\mathcal{P}(R) -\mathcal{P}^*(R))|^2
	\\
	+ \lambda^4e^{2\lambda(-R - z_0)^{-2}}|\mathcal{P}(-R) - \mathcal{P}^*(-R)|^2
	+\lambda^4e^{2\lambda(R - z_0)^{-2}}|\mathcal{Q}(R) -\mathcal{Q}^*(R)|^2
	+ \lambda^4e^{2\lambda(-R - z_0)^{-2}}|\mathcal{Q}(-R) -  \mathcal{Q}^*(-R)|^2
	\\
	+\epsilon\| \bv^{*}\|_{H^2(-R, R)^{|\bN|}}^2
	\Big].
	\label{4.14}
\end{multline}
Applying the Carleman estimate in Lemma \ref{lem31} for each entry of ${\bf h}$, we have
\begin{multline}
		\int_{-R}^Re^{2\lambda (z - z_0)^{-2}} |{\bf h}''(z)|^2 dz
		\geq  
		-Ce^{2\lambda (R-z_0)^{-2}}(\lambda^3 |{\bf h}(R)|^2 + \lambda |{\bf h}'(R)|^2) 
		\\
		- C e^{2\lambda (-R-z_0)^{-2}} (\lambda^3 |{\bf h}(-R)|^2 + \lambda |{\bf h}'(-R)|^2)
		+C \lambda^3 \int_{-R}^R e^{2\lambda (z - z_0)^{-2}}|{\bf h}(z)|^2dz		
		\\
		+C \lambda \int_{-R}^R e^{2\lambda (z - z_0)^{-2}} |{\bf h}'(z)|^2dz.
		\label{4.15}
	\end{multline}
Combining \eqref{4.14} and \eqref{4.15} and noting that $\lambda^4 \gg \lambda^3$	, we have
\begin{multline}
	\lambda^3 \int_{-R}^R e^{2\lambda (z - z_0)^{-2}}|{\bf h}(z)|^2dz
	+ 
	\lambda \int_{-R}^R e^{2\lambda (z - z_0)^{-2}} |{\bf h}'(z)|^2dz
	+ \lambda^4e^{2\lambda(R - z_0)^{-2}}\ 
	|{\bf h}(R)|^2
	\\
	+
	\lambda^4e^{2\lambda(-R - z_0)^{-2}}|{\bf h}(-R)|^2	
	+
	\lambda^4e^{2\lambda(R - z_0)^{-2}}|{\bf h}'(R)|^2	+
	\lambda^4e^{2\lambda(-R - z_0)^{-2}}|{\bf h}'(-R)|^2
	\\
	\leq
	C \Big[
		\int_{-R}^R  e^{2\lambda(z - z_0)^{-2}}
	\big|{\bf h}\big|^2dz
	+
	\int_{-R}^R  e^{2\lambda(z - z_0)^{-2}}
	\big|\bv^{(k)} - \bv^*\big|^2dz
	+
	\lambda^4e^{2\lambda(R - z_0)^{-2}}|\mathcal{P}(R) -\mathcal{P}^*(R))|^2
	\\
	+ \lambda^4e^{2\lambda(-R - z_0)^{-2}}|\mathcal{P}(-R) - \mathcal{P}^*(-R)|^2
	+\lambda^4e^{2\lambda(R - z_0)^{-2}}|\mathcal{Q}(R) -\mathcal{Q}^*(R)|^2
	\\
	+ \lambda^4e^{2\lambda(-R - z_0)^{-2}}|\mathcal{Q}(-R) -  \mathcal{Q}^*(-R)|^2
	+\epsilon\| \bv^{*}\|_{H^2(-R, R)^{|\bN|}}^2
	\Big].
	\label{4.16}
\end{multline}
It follows from \eqref{4.16} and the fact ${\bf h} = \bv^{(k+1)} - \bv^*$ that
\begin{multline}
	\int_{-R}^R e^{2\lambda (z - z_0)^{-2}}|\bv^{(k+1)} - \bv^*|^2dz
	\leq
	\frac{C}{\lambda^3}
	\Big[
		\int_{-R}^R  e^{2\lambda(z - z_0)^{-2}}
	\big|\bv^{(k)} - \bv^*\big|^2dz
	+ 
	\lambda^4e^{2\lambda(R - z_0)^{-2}}|\mathcal{P}(R) -\mathcal{P}^*(R))|^2
	\\
	+ \lambda^4e^{2\lambda(-R - z_0)^{-2}}|\mathcal{P}(-R) - \mathcal{P}^*(-R)|^2
	+\lambda^4e^{2\lambda(R - z_0)^{-2}}|\mathcal{Q}(R) -\mathcal{Q}^*(R)|^2
	\\
	+ \lambda^4e^{2\lambda(-R - z_0)^{-2}}|\mathcal{Q}(-R) -  \mathcal{Q}^*(-R)|^2
	+\epsilon\| \bv^{*}\|_{H^2(-R, R)^{|\bN|}}^2
	\Big].
	\label{4.17}
\end{multline}
Recall that ${\bf h} = \bv^{(k+1)} - \bv^*$.
Applying \eqref{4.17} when $k + 1$ is replaced by $k$ and combining the resulting estimate with \eqref{4.17}, we have
\begin{multline*}
	\int_{-R}^R e^{2\lambda (z - z_0)^{-2}}|\bv^{(k+1)} - \bv^*|^2dz
	\leq
	\frac{C}{\lambda^3}
	\Big[
		\frac{C}{\lambda^3}
	\Big[
		\int_{-R}^R  e^{2\lambda(z - z_0)^{-2}}
	\big|\bv^{(k-1)} - \bv^*\big|^2dz
	\\
	+ 
	\lambda^4e^{2\lambda(R - z_0)^{-2}}|\mathcal{P}(R) -\mathcal{P}^*(R))|^2
	+ \lambda^4e^{2\lambda(-R - z_0)^{-2}}|\mathcal{P}(-R) - \mathcal{P}^*(-R)|^2
	\\
	+\lambda^4e^{2\lambda(R - z_0)^{-2}}|\mathcal{Q}(R) -\mathcal{Q}^*(R)|^2
	+ \lambda^4e^{2\lambda(-R - z_0)^{-2}}|\mathcal{Q}(-R) -  \mathcal{Q}^*(-R)|^2
	\\
	+\epsilon\| \bv^{*}\|_{H^2(-R, R)^{|\bN|}}^2
	\Big]
	+ 
	\lambda^4e^{2\lambda(R - z_0)^{-2}}|\mathcal{P}(R) -\mathcal{P}^*(R))|^2
	+ \lambda^4e^{2\lambda(-R - z_0)^{-2}}|\mathcal{P}(-R) - \mathcal{P}^*(-R)|^2
	\\
	+\lambda^4e^{2\lambda(R - z_0)^{-2}}|\mathcal{Q}(R) -\mathcal{Q}^*(R)|^2
	+ \lambda^4e^{2\lambda(-R - z_0)^{-2}}|\mathcal{Q}(-R) -  \mathcal{Q}^*(-R)|^2
	+\epsilon\| \bv^{*}\|_{H^2(-R, R)^{|\bN|}}^2
	\Big].
\end{multline*}
Continuing the procedure, we obtain
\begin{multline}
	\int_{-R}^R e^{2\lambda (z - z_0)^{-2}}|\bv^{(k+1)} - \bv^*|^2dz
	\leq
	\Big(\frac{C}{\lambda^3}\Big)^{k+1}
	\int_{-R}^R  e^{2\lambda(z - z_0)^{-2}}
	\big|\bv^{(0)} - \bv^*\big|^2dz
	\\
	+ \sum_{i = 1}^k \Big(\frac{C}{\lambda^3}\Big)^{k}
	\Big[
		\lambda^4e^{2\lambda(R - z_0)^{-2}}|\mathcal{P}(R) -\mathcal{P}^*(R))|^2
	+ \lambda^4e^{2\lambda(-R - z_0)^{-2}}|\mathcal{P}(-R) - \mathcal{P}^*(-R)|^2
	\\
	+\lambda^4e^{2\lambda(R - z_0)^{-2}}|\mathcal{Q}(R) -\mathcal{Q}^*(R)|^2
	+ \lambda^4e^{2\lambda(-R - z_0)^{-2}}|\mathcal{Q}(-R) -  \mathcal{Q}^*(-R)|^2
	+\epsilon\| \bv^{*}\|_{H^2(-R, R)^{|\bN|}}^2
	\Big].
	\label{4.18}
\end{multline}
Choose $\lambda >  \lambda_1 > \lambda_0$ such that $C/\lambda^3 \in (0, 1)$ for some $\lambda_1$ depending only on $C$ and therefore only on $M,$ $p$, $N,$ $R$, and $[{\bf P}_{\bmm \leqq \bN}]$.
Estimate \eqref{4.19} is a direct consequence of \eqref{4.18}.
\end{proof}

\begin{Remark}
	Estimate \eqref{4.19} is interesting in the sense that when the data has noise, although the over-determined problem \eqref{2.12} might have no solution, we are still able to provide a resonably accurate numerical solution.
	In fact, the sequence $\{\bv^{(k)}\}_{k \geq 0}$ is well-defined regardless whether or not \eqref{2.12} is solvable. 
	This sequence is defined via minimizing a strictly convex cost functional as in \eqref{optimization} when $\bv$ is replaced by $\bv^{(k)}$, $k \geq 0$.
	Due to \eqref{noisedelta} and the result of Theorem \ref{thmPicard} in  \eqref{4.7}, when $\lambda > \lambda_1 $ is fixed so that $C/\lambda^3 \in (0, 1)$, we have
	\begin{equation}
	\|\bv^{(k+1)} - \bv^*\|^2_{L^2(-R, R)}
	\leq C_1\Big[
	\Big(\frac{C}{\lambda^3}\Big)^{k+1}
		\|\bv^{(0)} - \bv^*\|^2_{L^2(-R, R)}
	+ \frac{\delta^2 C/\lambda^3}{1 - C/\lambda^3}
	+\epsilon\| \bv^{*}\|_{H^2(-R, R)^{|\bN|}}^2
	\Big]
	\label{4.20}
\end{equation}
	where $C$ is the constant in Theorem \ref{thmPicard} and $C_1$ is a constant depending only on $\lambda,$ $z_0$, and $R$.
	On the other hand, \eqref{4.20} guarantees the global convergence. 
	Although $\bv^{(0)}$ is not a good initial guess of $\bv^*$, the approximating sequence converges to a numerical solution with the error $O(\delta + \sqrt{\epsilon} \|\bv^*\|_{H^2(-R, R)^{|\bN|}}).$	
	\label{rem4.2}
\end{Remark}	

Motivated from \eqref{2.5}, define
\[
	v^{(k)}(\x', z, t) = \sum_{n_1 = 1}^{N_1}
	\dots
	\sum_{n_{d - 1}}^{N_{d - 1}}
	\sum_{n_t = 1}^{N_t}
	v_{(n_1, \dots, n_{d - 1}, n_t)} (z)
	\Psi_{n_1}(x_1) \dots \Psi_{n_{d - 1}}(x_{d - 1})\psi_{n_t}(t)
\] for all $(\x, t) = (\x', z, t) \in \Omega \times (0, T)$.
Due to \eqref{2.3}, set
\begin{equation}
	c^{(k)}(\x) = \frac{v^{(k)}(\x, 0) - \Delta p(\x)}{p(\x)}
\end{equation}
for all $\x \in \Omega.$
By Remark \ref{rem4.2} and Theorem \ref{thmPicard}, we can find $\lambda_1$ as in Theorem \ref{thmPicard} such that for all $\lambda > \lambda_1$, we have
	\begin{equation}
	\|c^{(k+1)} - c^*\|^2_{L^2(-R, R)}
	\leq C_1\Big[
	\Big(\frac{C}{\lambda^3}\Big)^{k+1}
		\|\bv^{(0)} - \bv^*\|^2_{L^2(-R, R)}
	+ \frac{\delta^2 C/\lambda^3}{1 - C/\lambda^3}
	+\epsilon\| \bv^{*}\|_{H^2(-R, R)^{|\bN|}}^2 
	\Big]
	\label{5.7.1}
\end{equation}
where $C_1$ is a constant depending only on $\lambda,$ $z_0$, $R$, $\bN$, and $[\bP_{\bn}]_{\bn \leqq \bN}.$
Here $c^*$ is the true coefficient defined by
\[
		c^{*}(\x) = \frac{v^{*}(\x, 0) - \Delta p(\x)}{p(\x)}
\]
for all $\x \in \Omega$.
 
The analysis above leads to Algorithm \ref{alg} to solve the inverse problem under consideration.
Having the data $f$ and $g$ in hand, we can follow Algorithm \ref{alg} to compute a numerical solution to the inverse problem under consideration.
\begin{algorithm}[!ht]
\caption{\label{alg}The procedure to compute the numerical solution to \eqref{2.12}}
	\begin{algorithmic}[1]
	\State \label{s_chooseN} Choose cut-off numbers $N_1, \dots, N_{d-1}$, and $N_t$. 
	Set $\bN = (N_1, N_2, \dots, N_{d-1}, N_t)$
	\State \label{s1} Choose Carleman parameters $z_0,$  and $\lambda$, and a regularization parameter $\epsilon$.
	Choose a large number $M$.
	 \State \label{s2} Set $n = 0$. Choose an arbitrary initial solution $\bv^{(0)} \in B_M.$
		\State \label{step update} 		
		Compute $\bv_{(k + 1)} = \Phi^{(\lambda, \epsilon)}(\bv^{(k)})$ where $ \Phi^{(\lambda, \epsilon)}$ is defined in \eqref{optimization}.
	\If {$\|\bv_{(k + 1)} - \bv_{(k)}\|_{L^2(-R, R)} > \kappa_0$ (for some fixed number $\kappa_0 > 0$)} \label{if}
		\State Replace $k$ by $k + 1.$
		\State\label{step7} Go back to Step \ref{step update}.
	\Else	
		\State Set the computed solution $\bv^{\rm comp} = \bv_{(k + 1)}.$
	\EndIf
	\State \label{s11} Write $\bv^{\rm comp} = [v_{\bmm}^{(k+1)}]_{\bmm \leqq \bN}$ and set the desired solution as in \eqref{2.12}.
	\State Due to \eqref{2.5}, compute 
	\[
		v^{\rm comp} (\x, t) = v(\x', z, t) = \sum_{n_1 = 1}^{N_1}
	\dots
	\sum_{n_{d - 1}}^{N_{d - 1}}
	\sum_{n_t = 1}^{N_t}
	v_{(n_1, \dots, n_{d - 1}, n_t)}^{\rm comp}  (z)
	\Psi_{n_1}(x_1) \dots \Psi_{n_{d - 1}}(x_{d - 1})\psi_{n_t}(t)
	\]
	for $\x \in \Omega$, $t \in (0, T).$
	\State \label{s13} By \eqref{2.3}, we obtain the reconstructed coefficient $c$ as
	\begin{equation}
		c^{\rm comp}(\x) = \frac{v^{\rm comp}(\x, 0) - \Delta p(\x)}{p(\x)}
	\end{equation} for all $\x \in \Omega$.
\end{algorithmic}
\end{algorithm}

\section{Numerical study}\label{sec5}

To illustrate our method, we numerically study the inverse problem, Problem \ref{p}, in 2D. That means, for simplicity, we set $d = 2$. 
Let $\Omega = (-R, R)^2$ and therefore $\Omega = (-R, R)$. 
Rather than using the notation $\x'$, we write $x$ for an element of $\Omega'.$
In this section, we set $R = 1$.

\subsection{The forward problem}
To generate simulated data, we numerically compute the solution to \eqref{main_eqn}. Since we only need the data on $\Gamma,$ a part of $\partial \Omega$ and $t \in (0, T)$, it is not necessary to solve \eqref{main_eqn} on the whole unbounded domain $\R^d \times (0, \infty)$. 
Rather, we choose a domain $G = (-R_1, R_1)^2$ for some $R_1 > R$, in which $\Omega$ is compactly contained, and a positive number $T$.
We solve  
\begin{equation}
	\left\{
		\begin{array}{rcll}
			u_t(\x, t) &=& \Delta u(\x, t) + c(\x)u(\x, t) &(\x, t) \in G \times (0, T),\\
			u(\x, 0) &=& p(\x) &\x \in G
		\end{array}
	\right.
	\label{5.1}
\end{equation}
by the explicit method in the finite difference scheme.
More precisely, we discretize $G$ by arranging a grid of points
\[
	\mathcal{G} = \big\{
		\x_{i,j} = (x_i = -R_1 + (i - 1)d_\x , z_j = -R_1 + (j - 1)d_\x ): i, j = 1, \dots, N_\x^1 
	\big\} \subset \overline G.
\]
where $N_\x^1$ is an integer and $d_\x = 2R_1/(N_\x^1 - 1)$.
On the time domain, we arrange the partition of $[0, T]$ as
\[
	\mathcal{T} = \big\{
		t_l = (l - 1)d_t: l = 1, \dots, N_t
	\big\}
\]
where $N_t$ is an integer and $d_t = T/(N_t-1).$
We set $R = 1$, $R_1 = 3$, $T = 0.5$, $N_\x^1 = 241$, and $N_t = 4001$. 
The finite difference version of \eqref{5.1} is read as
\begin{multline}
	\frac{u(\x_{i,j}, t_{l+1}) - u(\x_{i,j}, t_l)}{d_t}
	 = \frac{u(\x_{i+1,j}, t_l) 
	+ u(\x_{i-1,j}, t_l)
	+u(\x_{i,j+1}, t_l)
	+u(\x_{i,j-1}, t_l)
	- 4u(\x_{i,j}, t_l)}{d_\x^2}	
	\\
	+ c(\x_{i, j}) u(\x_{i,j}, t_l)
	\label{5.2}
\end{multline}
for all $\x_{i, j} \in \mathcal{G}$, $i, j \in \{1, \dots, N_\x^1\}$, and $t_l \in \mathcal{T},$ $l \in \{1, \dots, N_t\}.$
It follows from \eqref{5.2} that
\begin{multline}
	u(\x_{i,j}, t_{l+1}) = u(\x_{i,j}, t_l)	+
	d_t\Big[\frac{u(\x_{i+1,j}, t_l) 
	+ u(\x_{i-1,j}, t_l)
	+u(\x_{i,j+1}, t_l)
	+u(\x_{i,j-1}, t_l)
	- 4u(\x_{i,j}, t_l)}{d_\x^2}	
	\\
	+ c(\x_{i,j}) u(\x_{i,j}, t_l)\Big].
	\label{5.3}
\end{multline}
So, given $u(\x_{i,j}, t_1) = p(\x_{i, j})$ for $i, j \in \{1, \dots, N_\x^1\}$, we can compute $u(\x_{i,j}, t_2)$ via \eqref{5.3}, and then continue get $u(\x_{i,j}, t_{N_t})$.
In our computation, we set $p(\x) = 2$ for all $\x \in G.$
We then easily extract the noiseless data $f^*$ and $g^*$ on $\Gamma$. We pretend not to know $f^*$ and $g^*$. We only know the noisy version of $f^*$ and $g^*$ as
\begin{equation}
	f = f^*(1 + \mbox{random numbers in } [-\delta, \delta]) 
	\label{5.4}
\end{equation}
and
\begin{equation}
	g = g^*(1 + \mbox{random numbers in } [-\delta, \delta]) 
	\label{5.5}
\end{equation}
where $\delta$ is the noise level.
In our computational program, we choose the initial function $u(\x, 0) = p(\x) = 2$, for all $\x \in \Omega$, $R_1= 3$ and $T = 0.5$.

\subsection{The implementation of Algorithm \ref{alg}}\label{sec52}

{\bf Step \ref{s_chooseN} of Algorithm \ref{alg}.} In 2D, we only need we determine  $N_1$ and $N_t$. These numbers  are chosen by a data-driven procedure.
Recall that the data is measured on 
$\Gamma$ defined as in \eqref{gamma}. The measurement set consists of two parts, namely $\Gamma = \Gamma^+ \cup \Gamma^-$ where
\[
	\Gamma^{+} = \{\x = (x, z): x \in (-R, R) \mbox{and } z = R\}
\] and
\[
	\Gamma^{-} = \{\x = (x, z): x \in (-R, R) \mbox{and } z = -R\}.
\]
One can examine the approximation formula
\begin{equation}
	u(x, z, t) 
	\simeq
	\sum_{n_1 = 1}^{N_1} \sum_{n_t = 1}^{N_t} u_{(n_1, n_t)}(z) \bP_{(n_1, n_t)}(x, t)
		\label{5.6}
\end{equation}
 for the data $u(\x, t) = u(x, z, t) = f(\x, t)$ on either $\Gamma^+$ or $\Gamma^-$.
 Note that \eqref{5.6} is an analog of \eqref{2.5} where $v$ is replaced by the function $u$.
 We do so on $\Gamma^-.$ Define the mismatch function
 \[
 	\varphi_{(N_1, N_t)}(x, t) =\Big|u(x, -R, t) - \sum_{n_1 = 1}^{N_1} \sum_{n_t = 1}^{N_t} u_{(n_1, n_t)}(-R) \bP_{(n_1, n_t)}(x, t)\Big|
 \] for $(x, t) \in \Gamma^- \times (0, T).$
 Then, we test the smallness of $\varphi(N_1, N_t)$ by manually and gradually increasing $N_1$ and $N_t$ until $\varphi(N_1, N_t)$ is sufficiently small.
 For example, we take the data in the numerical test 1 below. We increase those two numbers so that $\|\varphi_{(N_1, N_t)}\|_{L^{\infty}(\Gamma^- \times (0, T))} < 5\times10^{-4}$. By this, we find $N_1 = 15$ and $N_t = 10$. We chose these two numbers for all of our numerical tests. 
 In Figure \ref{ChooseN}, we display the graphs of the function $\varphi_{(N_1, N_t)}$ for some values of $N_1$ and $N_t.$

\begin{figure}[!ht]
\begin{center}
	\subfloat[$\varphi_{(5, 5)}$]{\includegraphics[width=.3\textwidth]{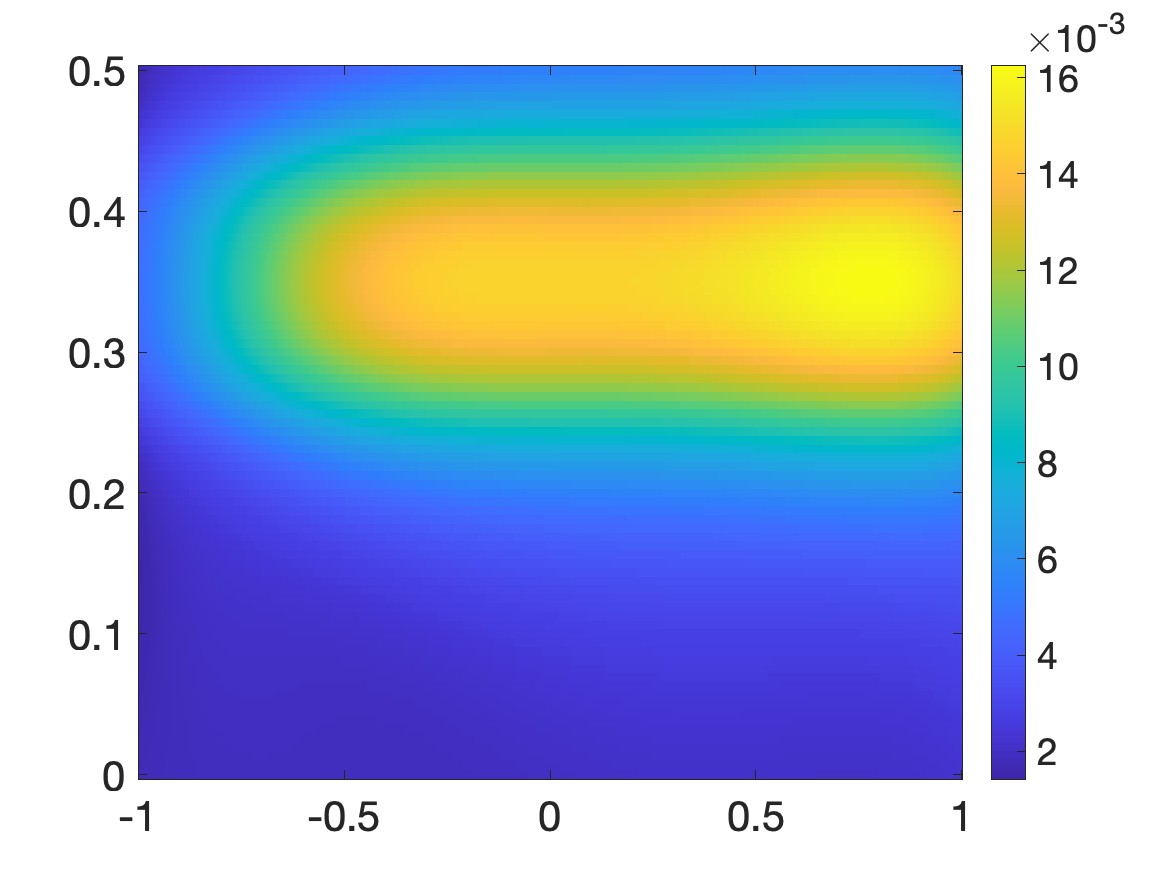}}
	\subfloat[$\varphi_{(15, 8)}$]{\includegraphics[width=.3\textwidth]{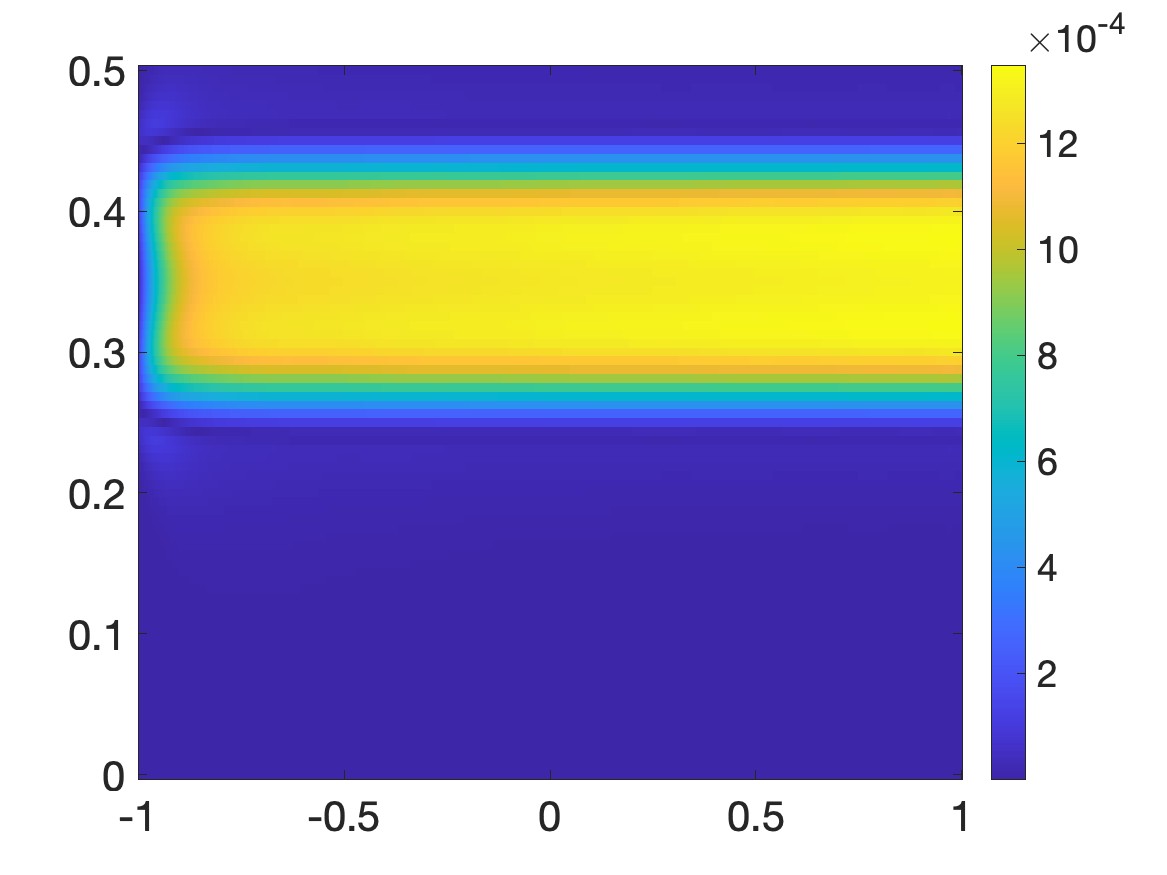}}
	\subfloat[$\varphi_{(15, 10)}$]{\includegraphics[width=.3\textwidth]{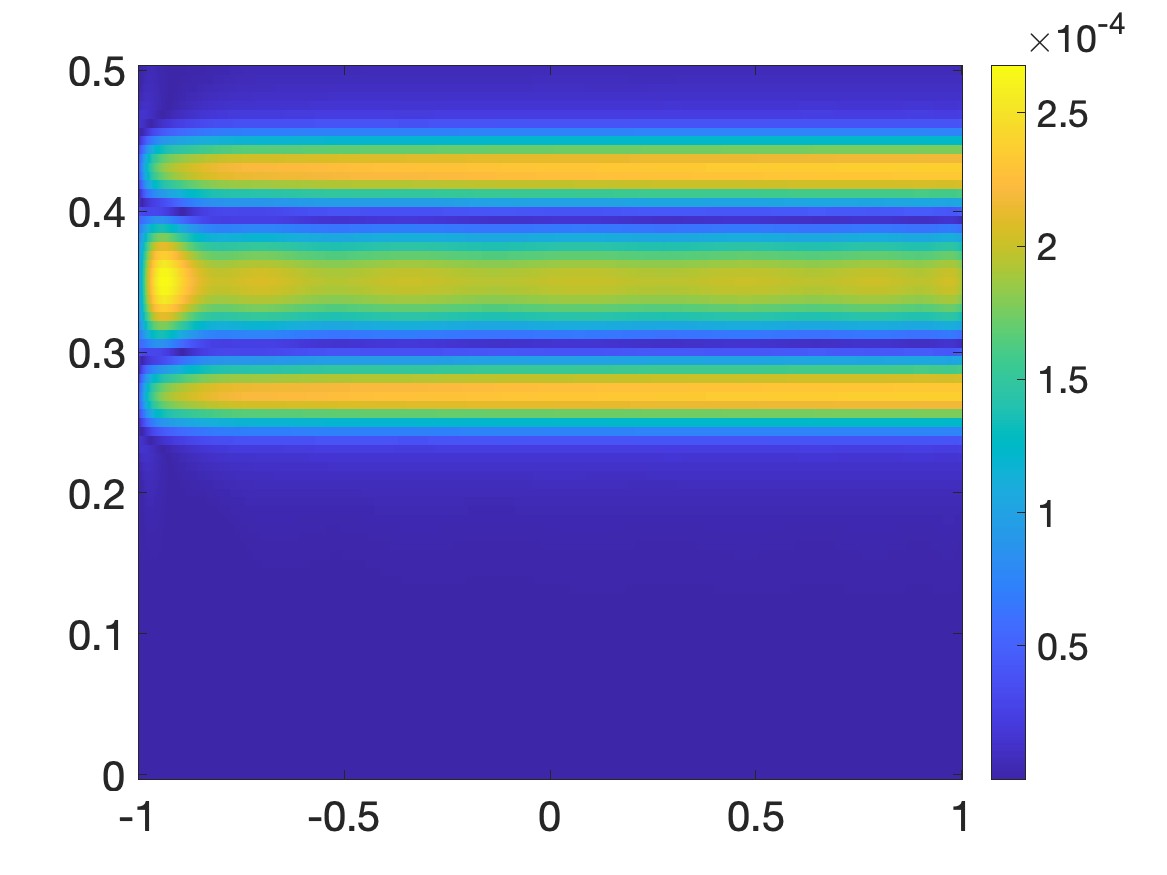}} 
	\caption{\label{ChooseN} The difference $\varphi_{(N_1, N_t)}$ of the data $u(x, -R, t)$ and its truncated Fourier approximation $\sum_{n_1 = 1}^{N_1} \sum_{n_t = 1}^{N_t} u_{(n_1, n_t)}(-R) \bP_{(n_1, n_t)}(x, t)$,  for some values of $(N_1, N_t)$. The data is taken in Test 1 below. Evidently, for Step \ref{s_chooseN} in Algorithm \ref{alg} during the calculation of the desired coefficient in Test 1, we can choose $N_1 = 15$ and $N_t = 10.$}
	\end{center}
\end{figure}

{\bf Step \ref{s1} of Algorithm \ref{alg}.}  In Step \ref{s1}, we select the parameters through a trial-error procedure involving experimentation and adjustments. To do so, we begin with a reference test where the accurate solution is known. Using this reference, we adjust the parameters until Algorithm \ref{alg} produces a satisfactory numerical outcome with data free of noise, i.e., $\delta = 0$. These chosen parameters are then applied across all subsequent tests and various noise levels $\delta$. The reference for our adjustments is Test 1, mentioned below. In all our numerical analyses, these values are set as $\epsilon = 10^{-6.5}$, $\kappa_0 = 10^{-3}$, $z_0 = -10$, and $\lambda = 10$. 

{\bf Step \ref{s2} of Algorithm \ref{alg}.} We need to choose a function $\bv^{(0)}in H$ . A convenient method to compute this function is solving the linear problem obtained by excluding the nonlinearity $\mathcal{F}$. 
More precisely, we solve the following problem 
\begin{equation}
	\left\{
		\begin{array}{ll}
			{\bv^{(0)}}''(z) + \mathcal{S}::\bv^{(0)}(z)  = 0 &z \in (-R, R),\\
			\bv^{(0)}(z) = \mathcal{P}(z) &z = \pm R,\\
			{\bv^{(0)}}'(z) = \mathcal{Q}(z) &z = \pm R
		\end{array}
	\right.
	\label{5.7}
\end{equation}
for the function $\bv^{(0)}.$
We can compute the numerical solution to \eqref{5.7} by the quasi-reversibility method.
We do not present the numerical implementation to solve linear PDEs using the Carleman quasi-reversibility method in this paper. The reader can find the details about this in \cite{LeNguyen:jiip2022, Nguyen:CAMWA2020, Nguyens:jiip2020}.

{\bf Step \ref{step update} of Algorithm \ref{alg}.}
In Step \ref{step update}, we minimize $J_{\bv^{(k)}}$ and set $\bv^{(k+1)}$ as its minimizer. The obtained minimizer $\bv^{(k+1)}$ is actually the solution to 
\begin{equation}
	\left\{
		\begin{array}{ll}
			{\bv^{(k+1)}}''(z) + \mathcal{S}::\bv^{(k+1)}(z) + \mathcal{F}(\bv^{(k)}(z)) = 0 &z \in (-R, R),\\
			\bv^{(k+1)}(z) = \mathcal{P}(z) &z = \pm R,\\
			{\bv^{(k+1)}}'(z) = \mathcal{Q}(z) &z = \pm R.
		\end{array}
	\right.
	\label{5.8}
\end{equation}
The details in implementation to compute the regularized solution $\bv^{(k+1)}$ to \eqref{5.8} were presented in \cite{LeNguyen:jiip2022, Nguyen:CAMWA2020, Nguyens:jiip2020}, in which we employ the optimization package already built in Matlab. We do not repeat it here.

{\bf Steps \ref{if}--\ref{s13}.} Executing these steps is direct and not complicated. For brevity, we do not present them here.

\subsection{Numerical examples}

We show in this section three (3) numerical examples computed by Algorithm \ref{alg}. 

{\bf Test 1.}
In test 1, the true coefficient $c$ is given by
\[
	c^{\rm true}(x, z) = \left\{
		\begin{array}{ll}
			e^{\frac{0.35x^2 + (z - 0.4)^2}{0.55^2 - (0.35x^2 + (z - 0.4)^2)}} & 0.35x^2 + (z - 0.4)^2 < 0.55^2,
			\\
			0 &\mbox{otherwise}.
		\end{array}
	\right.
\] for $(x, z) \in \Omega.$
It is characterized by an ``ellipse" inclusion.
The true and computed coefficient $c$ are displayed in Figure \ref{figtest1}.

\begin{figure}[!ht]
\centering
	\subfloat[The true coefficient $c^{\rm true}$]{\includegraphics[width=.3\textwidth]{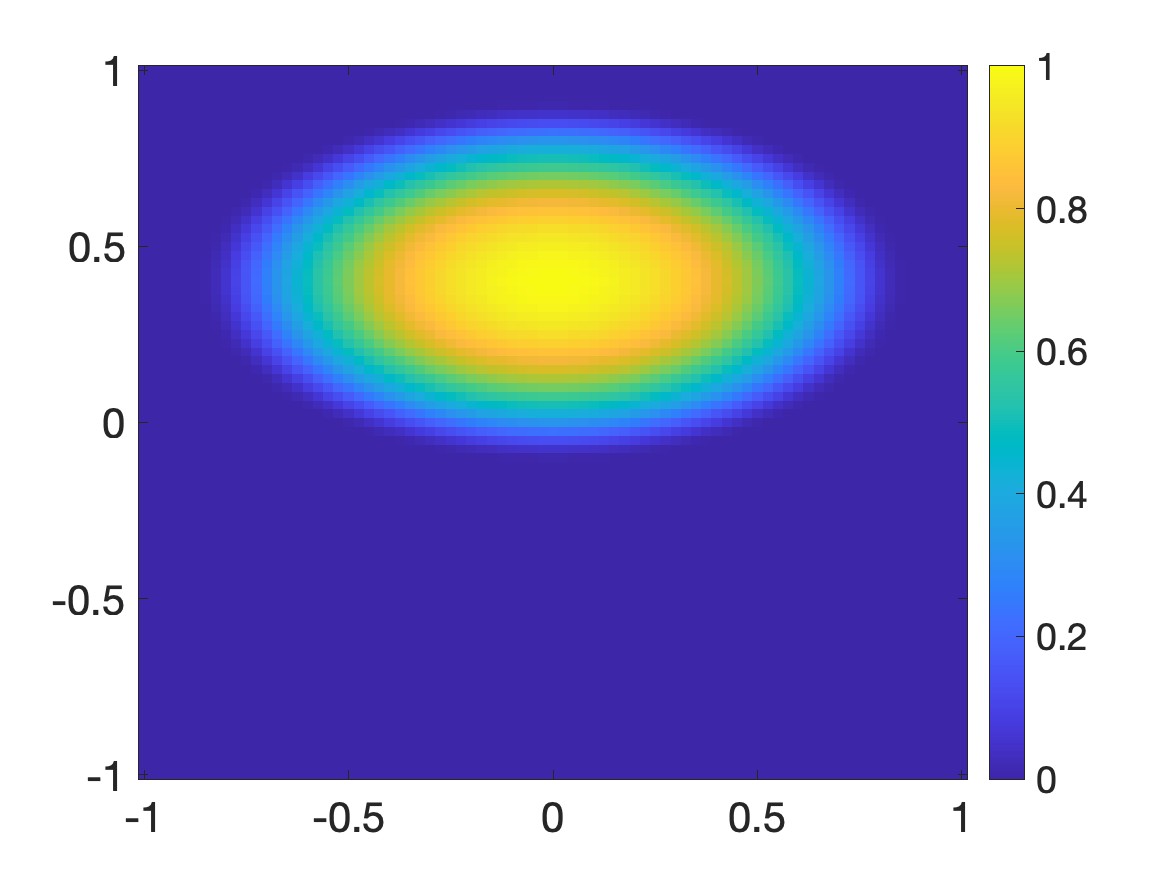}}
	\quad
	\subfloat[The computed coefficient $c^{\rm comp}$]{\includegraphics[width=.3\textwidth]{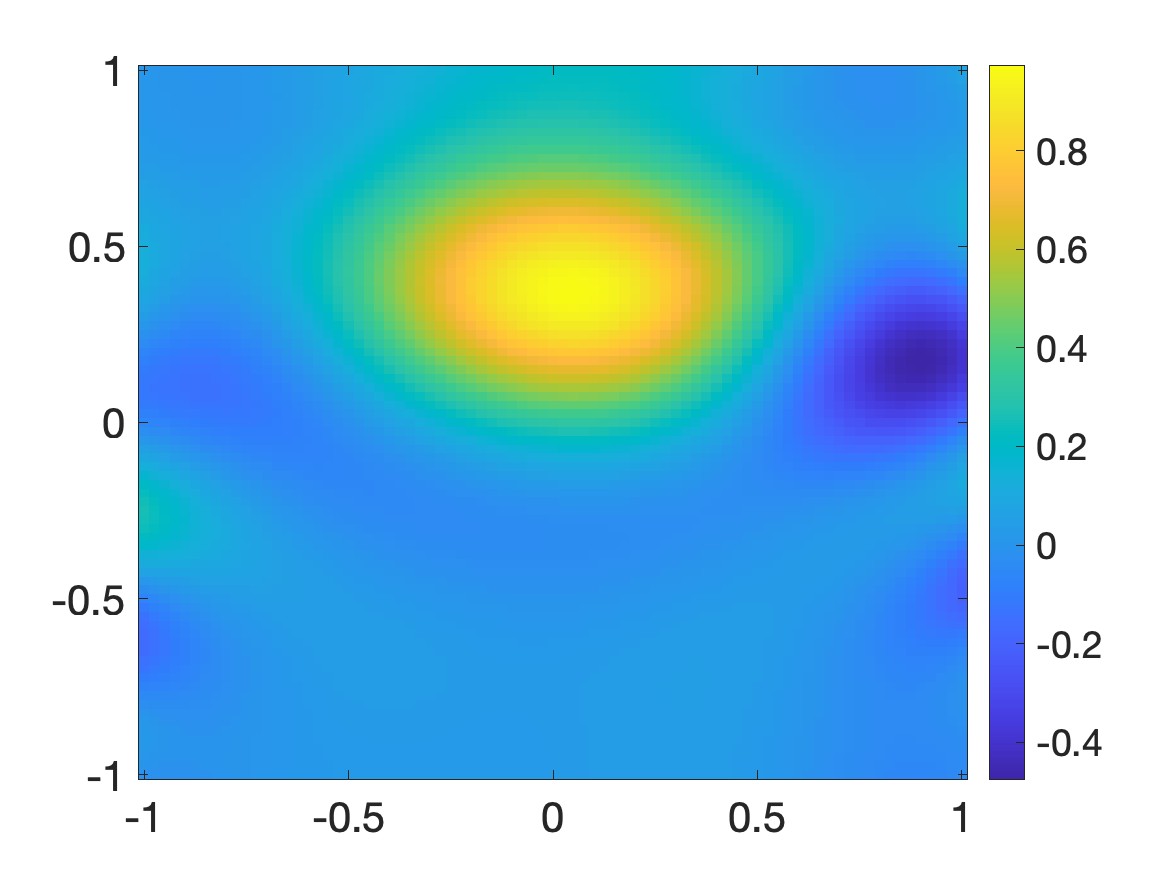}}
	\quad
	\subfloat[\label{fig1c}The consecutive relative error $\ds\frac{\|\bv^{(k )} - \bv^{(k-1)}\|_{L^{\infty}}}{\|\bv^{(k)}\|_{L^\infty}}$.]{\includegraphics[width=.3\textwidth]{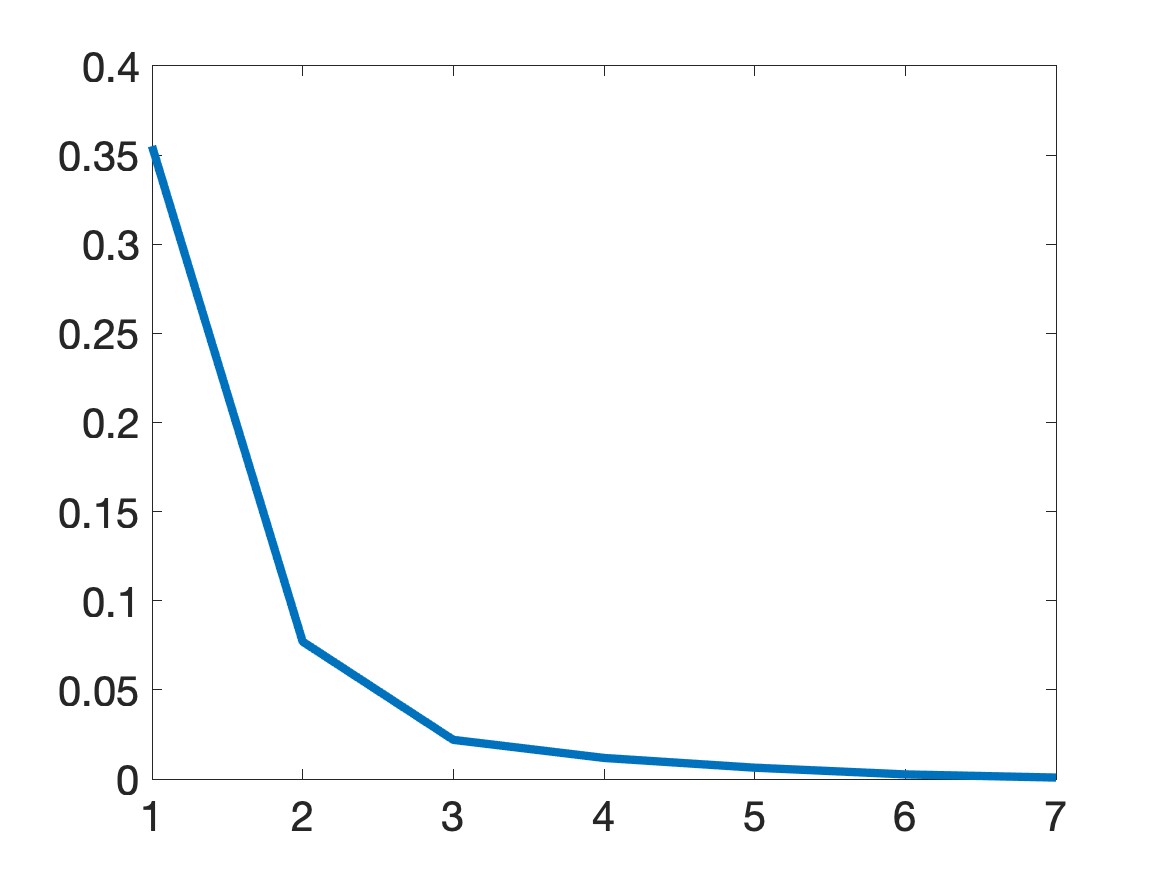}}
\caption{\label{figtest1} (a) The true coefficient, (b) the computed one from data on $\Gamma$ corrupted with $5\%$ of noise, and (c) the relative difference of the reconstructed of the solution $\bv$ to \eqref{2.12} at the $k^{\rm th}$ iteration. Although the data are missing in $\partial \Omega \setminus \Gamma$, the reconstruction is acceptable. The convergence of our method is numerically confirmed.}
\end{figure}

It is evident that Algorithm \ref{alg} generates a satisfactory numerical solution. It is evident that the ``ellipse inclusion" was successfully detected. 
The maximum value of the function $c$ inside the inclusion is $1$. The constructed value is $0.952$ (relative error 4.8\%).
Due to Figure \ref{fig1c}, the stopping criterium of Algorithm \ref{alg} is met after only seven iterations.

{\bf Test 2.} We test the case when the true coefficient is characterized by two horizontal inclusions.
More precisely,
we set
\[
	c^{\rm true}(x, z) =
	\left\{
		\begin{array}{ll}
			1 &\max\{0.25 |x|, 4|z - 0.6|\} < 0.8,\\
			1 &\max\{0.25 |x|, 4|z + 0.6|\} < 0.8,\\
			0 &\mbox{otherwise},
		\end{array}
	\right.
\quad
	\mbox{for all } (x, z) \in \Omega.	
\]
The true and computed solutions to Problem \ref{p} are shown in Figure \ref{figtest2}
\begin{figure}[!ht]
\centering
	\subfloat[The true coefficient $c^{\rm true}$]{\includegraphics[width=.3\textwidth]{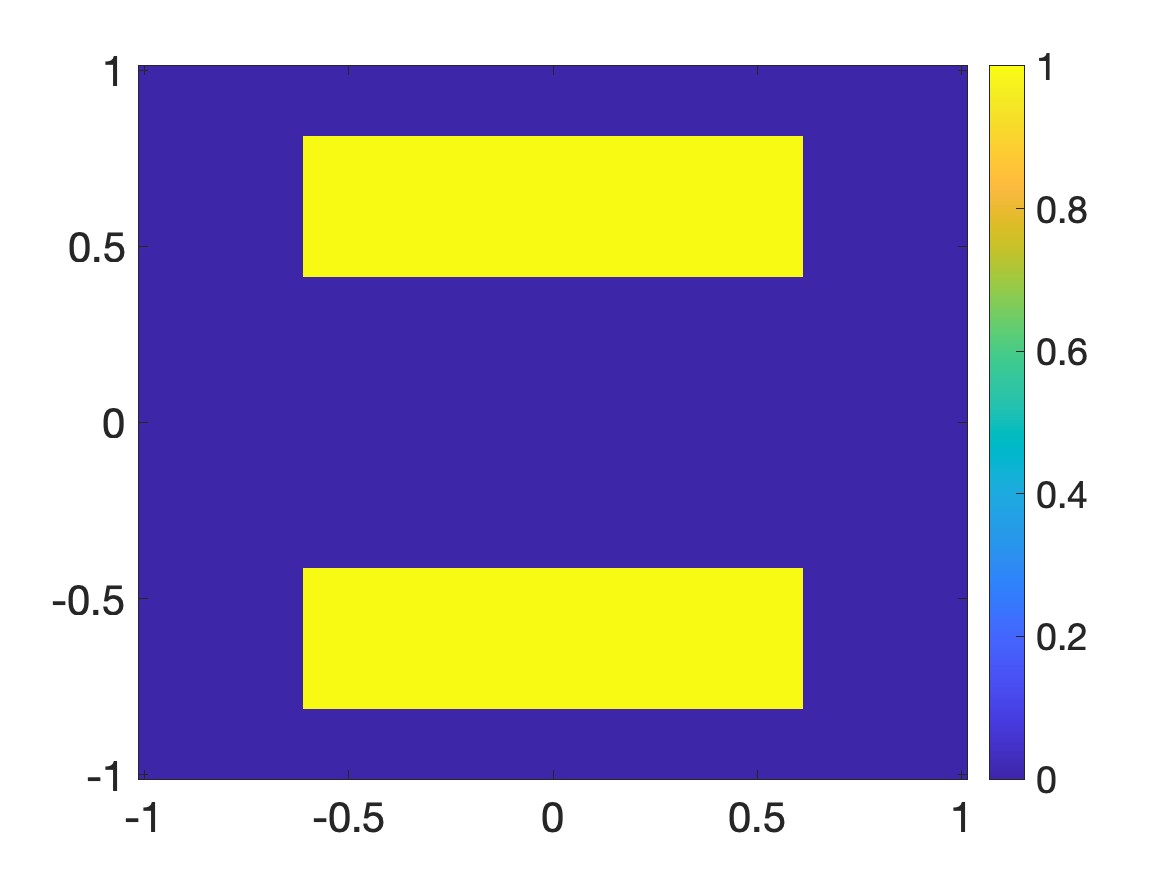}}
	\quad
	\subfloat[The computed coefficient $c^{\rm comp}$]{\includegraphics[width=.3\textwidth]{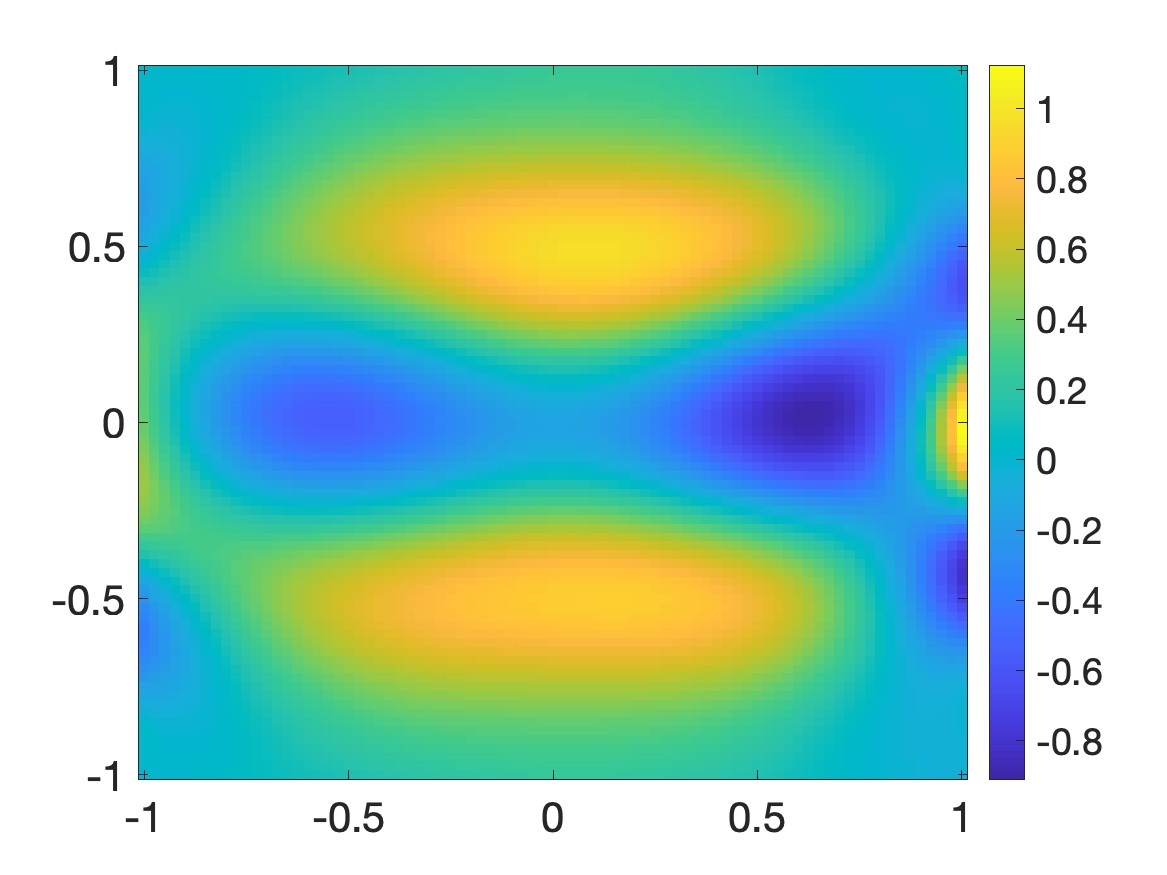}}
	\quad
	\subfloat[\label{fig2c}The consecutive relative error $\ds\frac{\|\bv^{(k )} - \bv^{(k-1)}\|_{L^{\infty}}}{\|\bv^{(k)}\|_{L^\infty}}$.]{\includegraphics[width=.3\textwidth]{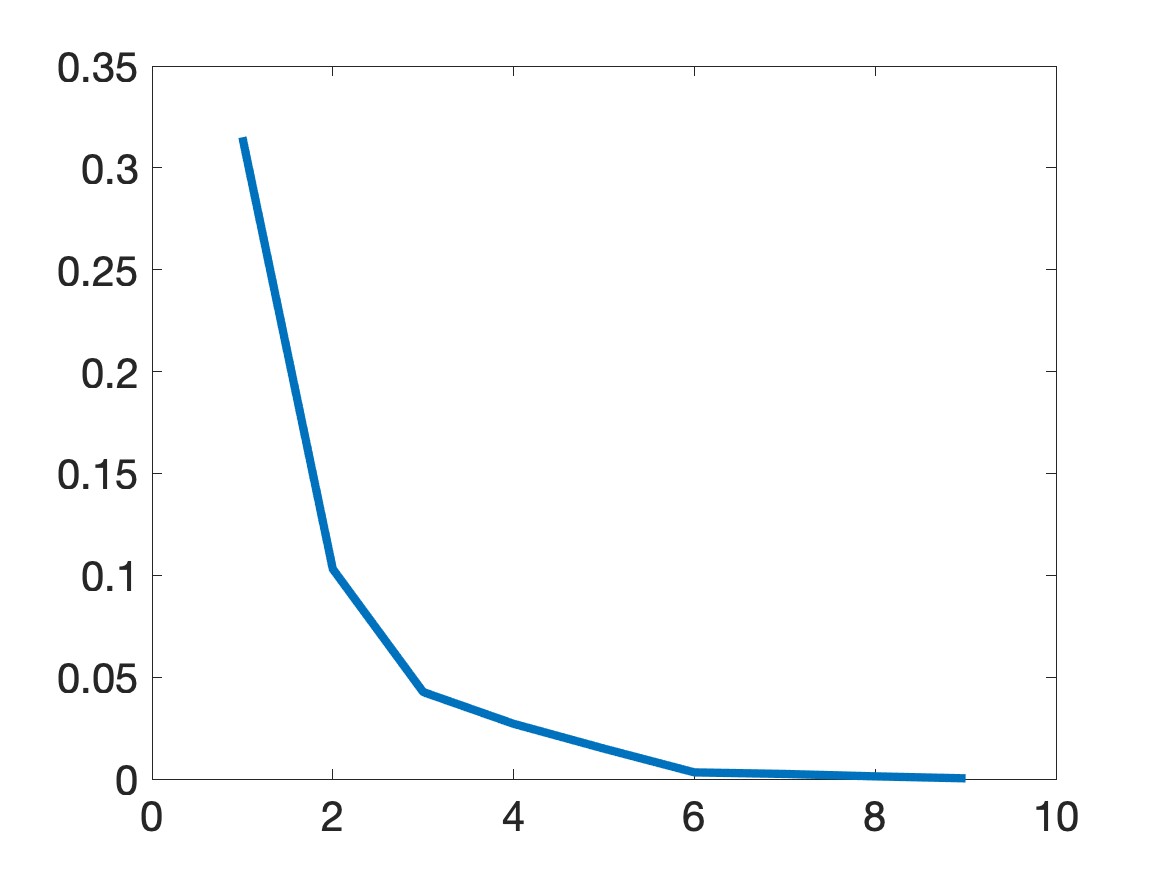}}
\caption{\label{figtest2} (a) The true coefficient, (b) the computed one from data on $\Gamma$ corrupted with $5\%$ of noise, and (c) the relative difference of the reconstructed of the solution $\bv$ to \eqref{2.12} at the $k^{\rm th}$ iteration. "Despite the absence of data in $\partial \Omega \setminus \Gamma$, the reconstruction remains satisfactory. Our method's fast convergence has been verified numerically.}
\end{figure}

As in Test 1, we can see that Algorithm \ref{alg} provides a satisfactory numerical solution. It is evident that both horizontal inclusions were successfully identified. 
The maximum value of the function $c$ inside each inclusion is $1$. The constructed value of $c$ inside the upper inclusion is $0.963$ (relative error 3.7\%), and the one inside the lower inclusion is $0.875$ (relative error 12.4\%).
Due to Figure \ref{fig2c}, Algorithm \ref{alg} stops at only nine iterations.

{\bf Test 3.} 
Test 3 checks the case when the true coefficient $c$ has a $T$ inclusion. That means the function $c^{\rm true}$ takes the value 1 inside a letter $T$ and $0$ otherwise. We refer the reader to Figure \ref{figtest3} for the image of the true and computed coefficients.

\begin{figure}[!ht]
\centering
	\subfloat[The true coefficient $c^{\rm true}$]{\includegraphics[width=.3\textwidth]{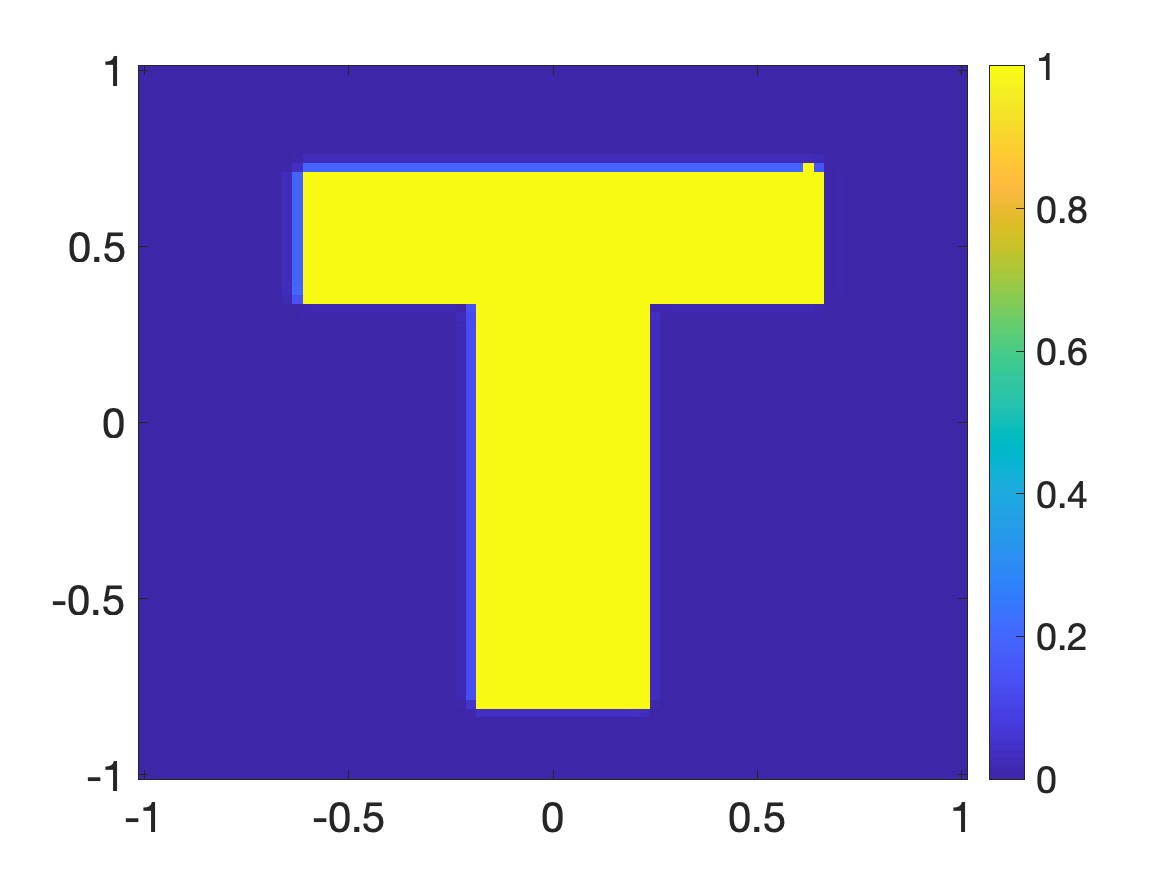}}
	\quad
	\subfloat[The computed coefficient $c^{\rm comp}$]{\includegraphics[width=.3\textwidth]{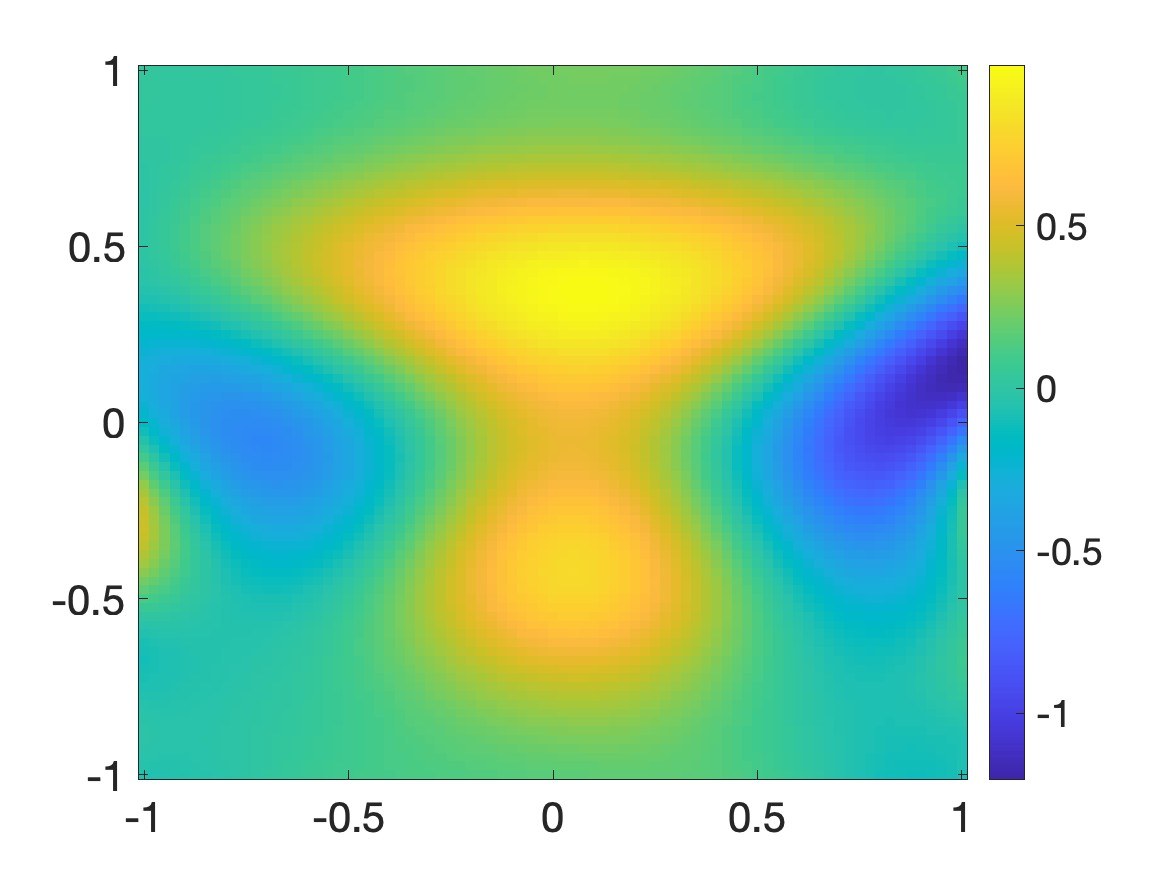}}
	\quad
	\subfloat[\label{fig3c}The consecutive relative error $\ds\frac{\|\bv^{(k )} - \bv^{(k-1)}\|_{L^{\infty}}}{\|\bv^{(k)}\|_{L^\infty}}$.]{\includegraphics[width=.3\textwidth]{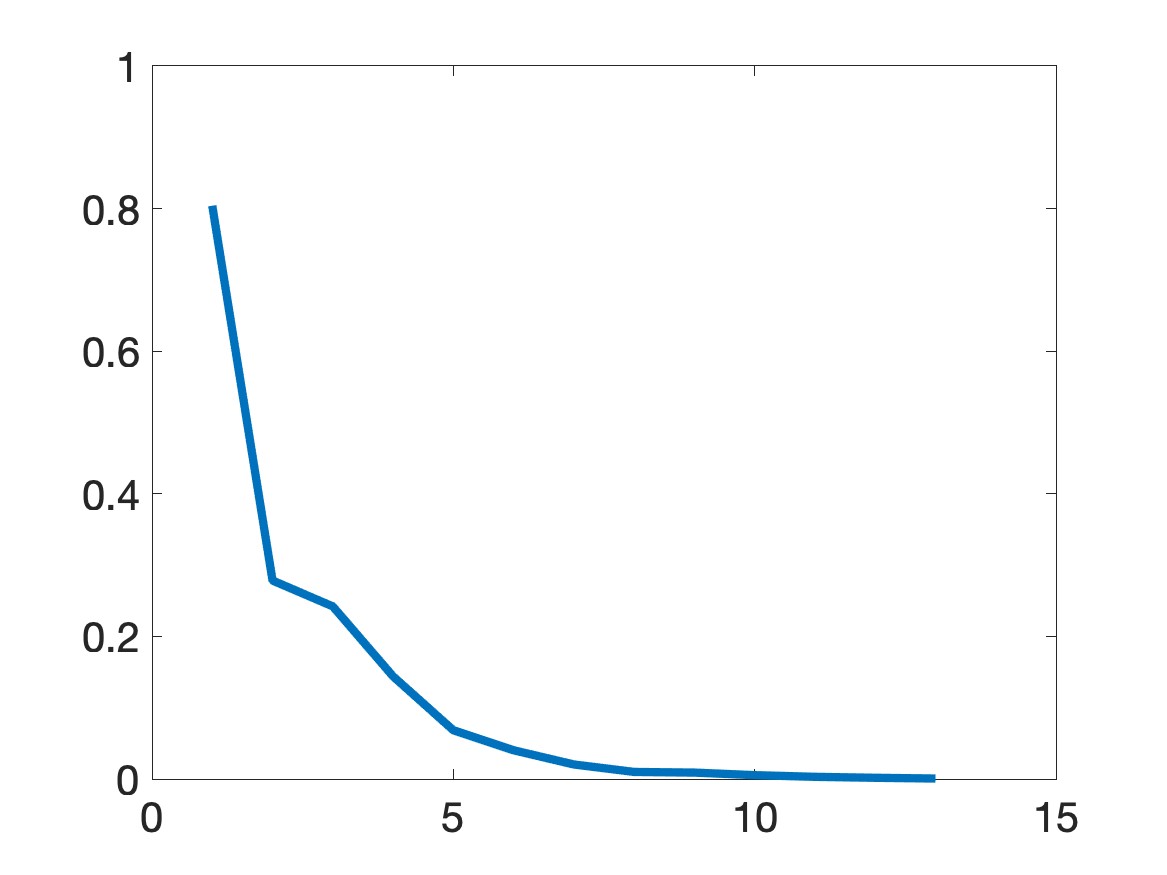}}
\caption{\label{figtest3} (a) The true coefficient, (b) the computed one from data on $\Gamma$ corrupted with $5\%$ of noise, and (c) the relative difference of the reconstructed of the solution $\bv$ to \eqref{2.12} at the $k^{\rm th}$ iteration. "Despite the absence of data in $\partial \Omega \setminus \Gamma$, the reconstruction of the letter $T$ is acceptable. Our method's fast convergence has been verified numerically.}
\end{figure}

As in Test 1 and Test 2, we can see that the $T$ inclusion was successfully found. 
The maximum value of the function $c$ inside each inclusion is $1$. The constructed value of $c$ inside the  inclusion  is $0.98$ (relative error 2\%).
Due to Figure \ref{fig3c}, Algorithm \ref{alg} stops at only 13 iterations.
\begin{Remark}
	The computational cost of Algorithm \ref{alg} is not expensive. 
	In fact, we only need to solve several 1D linear over-determined problems in Steps \ref{if}-\ref{step7}.
	We have used a personal computer iMac with a 3.2GHz Intel Core i5 Processor and memory of 24GB, not a professional workstation, to compute the numerical solutions above. It took 46.93, 56.56, and 77.12 seconds to complete all computational tasks for the inverse problem in Test 1, Test 2, and Test 3, respectively. 
\end{Remark}

\begin{Remark}
	It is worth mentioning that the images of the inclusions in the tests above are not perfect. One can find some artifacts occurring in the numerical results.
	However, these errors are acceptable because we solve the inverse problem when the data are given only on the part $\Gamma$ of $\partial \Omega.$ 
\end{Remark}

\section{Concluding remarks}\label{sec6}

In this study, we provide a numerical method to solve a nonlinear coefficient inverse problem for parabolic equations.
This inverse problem has numerous practical applications. To obtain these solutions, we employ the polynomial-differential basis, converting the inverse problem into a set of 1D nonlinear equations. Next, we introduce a method to address this nonlinear system. Our method is based on a combination of the Picard iteration, the quasi-reversibility method, and a Carleman estimate. We provide analytic proof of the method's convergence and demonstrate its effectiveness through some numerical examples.

 \section*{Acknowledgement}
The works of RA, TTL, and LHN were partially supported  by National Science Foundation grant DMS-2208159, by funds provided by the Faculty Research Grant program at UNC Charlotte Fund No. 111272,  and by
the CLAS small grant provided by the College of Liberal Arts \& Sciences, UNC Charlotte. 
The works of CP were supported in part by National Science Foundation grant DMS-2150179.

\bibliographystyle{plain}
\bibliography{../../../../mybib}

\end{document}